\definecolor{linkred}{rgb}{0.6,0,0}
\definecolor{linkblue}{rgb}{0,0,0.6}
\newtheorem{theorem}{Theorem}[section]
\newtheorem{lemma}[theorem]{Lemma} 
\newtheorem{proposition}[theorem] {Proposition}
\newtheorem{corollary}[theorem]{Corollary}
\theoremstyle{definition}
\newtheorem{definition}[theorem]{Definition}
\newtheorem{example}[theorem]{Example}
\theoremstyle{remark}
\newtheorem{remark}[theorem]{Remark}
\newtheoremstyle{myitemstyle}						
	{}			
	{}			
	{}			
	{}			
	{}			
	{.}			
	{ }			
	{}			
\theoremstyle{myitemstyle}
\newtheorem{myitemthm}[theorem]{}
\newcommand{\R}{\mathbb{R}}
\newcommand{\Rbar}{\overline{\mathbb{R}}}
\newcommand{\Z}{\mathbb{Z}}
\newcommand{\Q}{\mathbb{Q}}
\newcommand{\C}{\mathbb{C}}
\newcommand{\N}{\mathbb{N}}
\newcommand{\calA}{\mathcal{A}}
\newcommand{\calO}{\mathcal{O}}
\DeclareMathOperator{\Spec}{Spec}
\DeclareMathOperator{\Hom}{Hom}
\DeclareMathOperator{\trop}{trop}
\DeclareMathOperator{\val}{val}
\DeclareMathOperator{\rk}{rk}
\DeclareMathOperator{\Trop}{Trop}
\DeclareMathOperator{\LOG}{LOG}
\DeclareMathOperator{\codim}{codim}
\DeclareMathOperator{\Link}{L}
\title{Tropical compactification in log-regular varieties}
\author{Martin Ulirsch}
\address{Department of Mathematics, Brown University, Providence, RI 02912, USA}
\email{\href{mailto:ulirsch@math.brown.edu}{ulirsch@math.brown.edu}}
\subjclass[2010]{14T05; 14G22; 20M14}
\date{\today}
\thanks{The author's research was supported in part by funds from BSF grant 201025 and NSF grants DMS0901278 and DMS1162367.} 
\begin{document}

\maketitle

\begin{abstract} 
In this article we define a natural tropicalization procedure for closed subsets of log-regular varieties in the case of constant coefficients and study its basic properties. This framework allows us to generalize some of Tevelev's results on tropical compactification as well as Hacking's result on the cohomology of the link of a tropical variety to log-regular varieties.
\end{abstract}

\setcounter{tocdepth}{1}

\tableofcontents


\section{Introduction}

Let $k$ be an algebraically closed field. In \cite{Kato_toricsing} K. Kato defines the notion of a \emph{log-regular} variety, a generalization of a toric variety in the category of logarithmic schemes over $k$. Such a log-regular variety $X$ contains an open and dense non-singular subset $X_0\subseteq X$ on whose complement $X-X_0$ at most toroidal singularities are allowed. Moreover $X-X_0$ has a natural stratification by locally closed subsets, called the \emph{strata} of $X$. These strata are in an order-reversing one-to-one correspondence with the positive dimensional cones in a rational polyhedral cone complex $\Sigma_X$ that is canonically associated to $X$. 

Let $Y$ be a closed subset of $X_0$. Based on the techniques developed in \cite{Ulirsch_functroplogsch} we define in this note a \emph{tropical variety} $\Trop_X(Y)$ associated to $Y$ relative to $X$ as a subset of $\Sigma_X$. 
The fundamental result on the structure of $\Trop_X(Y)$ (see \cite[Theorem A]{BieriGroves_polyhedral} and \cite[Theorem 2.2.3]{EinsiedlerKapranovLind_amoebae}) generalizes to this situation.

\begin{theorem}\label{thm_BieriGroves}
The tropical variety $\Trop_X(Y)$ admits a structure of a finite rational polyhedral cone complex of dimension $\leq\dim Y$. 
\end{theorem}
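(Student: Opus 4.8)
The plan is to deduce the theorem from the (trivially valued case of the) classical Bieri--Groves theorem for subvarieties of an algebraic torus, by combining the functoriality of the tropicalization map from \cite{Ulirsch_functroplogsch} with K. Kato's local structure theory for log-regular varieties. Since $X$ is of finite type, the cone complex $\Sigma_X$ has only finitely many cones $\sigma_Z$, one for each stratum $Z$. It therefore suffices to exhibit $\Trop_X(Y)$ as a finite union of rational polyhedral cones of dimension at most $\dim Y$, each contained in a single cone of $\Sigma_X$: a polyhedral cone complex structure on $\Trop_X(Y)$ refining $\Sigma_X$ is then obtained by common refinement, and the finiteness of the resulting complex follows from that of the set of cones of $\Sigma_X$.

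For the local analysis, fix a point $x\in X$, lying in the stratum $Z$ with associated cone $\sigma=\sigma_Z$. By Kato's structure theorem for log-regular schemes \cite{Kato_toricsing}, the log scheme $X$ is \'etale-locally around $x$ isomorphic to the product $V_\sigma\times_k\mathbb{A}^r_k$ for some $r\geq 0$, where $V_\sigma=\Spec k[S_\sigma]$ is the affine toric variety attached to $\sigma$ and the second factor carries the trivial log structure; composing with the projection we obtain an \'etale neighbourhood $j\colon U\to X$ of $x$ together with a strict smooth morphism $g\colon U\to V_\sigma$, under which $\Sigma_U$ is identified with the cone complex of $V_\sigma$ --- that is, with $\sigma$ and its faces --- inside $\Sigma_X$. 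As $X$ is quasi-compact, finitely many such data $(U_i,j_i,g_i,V_i,\sigma_i)$, $i=1,\dots,N$, can be chosen so that $\{j_i\colon U_i\to X\}$ is an \'etale cover; then $\coprod_i (j_i^{-1}Y)^{\mathrm{an}}\to Y^{\mathrm{an}}$ is surjective, and the functoriality of tropicalization under strict morphisms established in \cite{Ulirsch_functroplogsch} yields
\[
\Trop_X(Y)\;=\;\bigcup_{i=1}^{N}\Trop_{U_i}\!\big(j_i^{-1}Y\big)\;=\;\bigcup_{i=1}^{N}\Trop_{V_i}\!\big(C_i\big),\qquad C_i:=g_i\big(j_i^{-1}Y\big),
\]
the sets on the right being regarded as subsets of $\Sigma_X$ via the embeddings $\Sigma_{U_i}\cong\Sigma_{V_i}\hookrightarrow\Sigma_X$. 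Here $C_i$ is a constructible subset of the dense torus $T_i$ of $V_i$ --- it lands in $T_i$ because $Y\subseteq X_0$ and $g_i$ is strict --- and $\dim C_i\leq\dim(j_i^{-1}Y)=\dim Y$ since $j_i$ is \'etale.

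It remains to describe $\Trop_{V_i}(C_i)$ in purely toric terms. Unwinding the definition of the toric tropicalization map, a point of $\sigma_i\subseteq\Sigma_X$ lies in $\Trop_{V_i}(C_i)$ precisely when it lies in the classical tropicalization $\trop(C_i^{\mathrm{an}})\subseteq N_\R$ of $C_i$ with respect to the trivial absolute value on $k$; that is, $\Trop_{V_i}(C_i)=\trop(C_i^{\mathrm{an}})\cap\sigma_i$. Furthermore $\trop(C_i^{\mathrm{an}})$ coincides with $\trop(W_i^{\mathrm{an}})$, where $W_i$ denotes the Zariski closure of $C_i$ in $T_i$, by density of $C_i^{\mathrm{an}}$ in $W_i^{\mathrm{an}}$ together with continuity of the tropicalization map. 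Applying \cite[Theorem A]{BieriGroves_polyhedral} and \cite[Theorem 2.2.3]{EinsiedlerKapranovLind_amoebae} to the irreducible components of $W_i$, we see that $\trop(W_i^{\mathrm{an}})$ is a finite rational polyhedral fan of dimension at most $\dim W_i=\dim C_i$ --- a \emph{fan}, and not merely a polyhedral complex, because the valuation on $k$ is trivial. Intersecting each of its cones with $\sigma_i$ produces a finite collection of rational polyhedral cones of dimension at most $\dim Y$ meeting along faces, so each $\Trop_{V_i}(C_i)$, and hence their union $\Trop_X(Y)$, has the structure demanded in the first paragraph.

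The technical heart of the argument --- and the step I expect to cost the most work --- is the compatibility of $\Trop_X$ with the toric charts supplied by Kato's theorem: one must verify that the functoriality results of \cite{Ulirsch_functroplogsch} genuinely apply to the strict smooth morphisms $g_i\colon U_i\to V_i$, and that after passage to such a chart the subset $Y$, which is closed in $X_0$ but only locally closed in $X$, is faithfully reflected by the torus subset $C_i$. The remaining ingredients --- the identification $\Trop_{V_i}(C_i)=\trop(C_i^{\mathrm{an}})\cap\sigma_i$, the invariance of classical tropicalization under Zariski closure of a constructible subset of a torus, and the common-refinement step producing a genuine cone complex --- should be routine once this compatibility is in hand.
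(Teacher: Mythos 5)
Your strategy is the one the paper itself uses: cover $X$ by the \'etale toric charts supplied by Kato's structure theory, identify $\Trop_X(Y)$ chart by chart with the classical tropicalization of a (closure of a) constructible subset of the chart's torus intersected with the corresponding cone of $\Sigma_X$, and then invoke the classical Bieri--Groves theorem. The ``technical heart'' you single out is exactly what the paper isolates as Lemma \ref{lemma_tropchart} and Proposition \ref{prop_Tropchart}, and your dimension bookkeeping ($\dim C_i\leq\dim Y$ because images and \'etale pullbacks do not raise dimension) matches the paper's.

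There is, however, one step whose stated justification is wrong: the claim that $\trop(C_i^{\mathrm{an}})=\trop(W_i^{\mathrm{an}})$ follows from density of $C_i^{\mathrm{an}}$ in $W_i^{\mathrm{an}}$ together with continuity of $\trop$. Continuity and density only give $\trop(W_i^{\mathrm{an}})\subseteq\overline{\trop(C_i^{\mathrm{an}})}$, i.e.\ that $\trop(C_i^{\mathrm{an}})$ is \emph{dense} in the fan $\Trop(W_i)$; they do not exclude the possibility that $\trop(C_i^{\mathrm{an}})$ omits points of lower-dimensional faces, in which case $\trop(C_i^{\mathrm{an}})\cap\sigma_i$ would not be a union of closed cones and your refinement step would have nothing polyhedral to refine. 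The equality you need -- for a dense open (or dense constructible) subset $U$ of a closed subset $Y$ of a torus, $\trop(U^{\mathrm{an}})=\Trop(Y)$ on the nose -- is precisely the paper's Lemma \ref{lemma_tropopendense}, and its proof is a lifting argument, not a topological one: one passes to a non-trivially valued complete extension $K$, takes the flat closure of $Y_K$ over $K^\circ$, and uses the fundamental theorem of tropical geometry together with \cite[Proposition 4.14]{Gubler_guide} to produce a $K^\circ$-point of $Y$ that lies in $U$ and reduces to a prescribed point of the special fiber, hence has the prescribed tropicalization. You do list this ``invariance under Zariski closure'' among the remaining routine ingredients, but it is the one genuinely non-routine input beyond the chart compatibility, and the argument you sketch for it does not establish it. (A smaller point: for $\Sigma_{U_i}$ to be identified with $\sigma_i$ and its faces you must shrink $U_i$ so that it only meets strata whose closures contain the given one -- the paper's notion of a $\xi$-small chart -- which your setup assumes implicitly.) With Lemma \ref{lemma_tropopendense} supplied, the rest of your argument goes through and coincides with the paper's proof.
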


However, contrary to the classical result of Bieri and Groves \cite[Theorem A]{BieriGroves_polyhedral} as well as Einsiedler, Kapranov, and Lind \cite[Theorem 2.2.3]{EinsiedlerKapranovLind_amoebae}, the dimension of $\Trop_X(Y)$ may be less than the dimension of $Y$. We give examples of this behavior in Section \ref{section_structure}.

Moreover, we study the geometry of the closure $\overline{Y}$ of $Y$ in $X$ expanding on results of Tevelev \cite{Tevelev_tropcomp}, who has treated the case of $X$ being a toric variety. Let $X'$ be a log-regular compactification $X'$ of $X$. Then the cone complex $\Sigma_X$ is naturally a subcomplex of $\Sigma_{X'}$ and we have $\Trop_X(Y)=\Trop_{X'}(Y)\cap \Sigma_X$ for every closed subset $Y\subseteq X_0$. 

\begin{theorem}\label{thm_tropcomplogreg}
Let $Y$ be a closed subset of $X_0$. 
\begin{enumerate}[(i)]
\item Then the closure $\overline{Y}$ of $Y$ in $X$ is proper over $k$ if and only if 
\begin{equation*}
\Trop_{X'}(Y)\subseteq \Sigma_X \ .
\end{equation*}
\item If $\Trop_{X'}(Y)\supseteq\Sigma_X$, then $\overline{Y}$ intersects all strata $E$ of $X$ and the intersection has the expected dimension, i.e. 
\begin{equation*}
\dim \overline{Y}\cap E=\dim Y - \codim_X E \ .
\end{equation*}
\end{enumerate}
\end{theorem}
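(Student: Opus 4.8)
The plan is to adapt Tevelev's argument \cite{Tevelev_tropcomp}, replacing the torus--orbit dictionary by the strata--cone correspondence of the log-regular variety and by the analytic description of $\Trop$ from \cite{Ulirsch_functroplogsch}. Write $\overline{Y}$ for the closure of $Y$ in $X$ and $\overline{Y}'$ for its closure in $X'$; since $X$ is open in $X'$ we have $\overline{Y}=\overline{Y}'\cap X$, and in particular $\overline{Y}\cap E=\overline{Y}'\cap E$ for every stratum $E$ contained in $X$. As $X'$ is proper over $k$, so is the closed subscheme $\overline{Y}'$, and hence $\overline{Y}$ is proper over $k$ if and only if $\overline{Y}=\overline{Y}'$, i.e. if and only if $\overline{Y}'$ is disjoint from the boundary $X'\smallsetminus X$. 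The strata of $X'$ contained in $X'\smallsetminus X$ are precisely those whose cones lie in $\Sigma_{X'}\smallsetminus\Sigma_X$, so part (i) is reduced to determining which strata $\overline{Y}'$ meets.

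This is the content of the following Tevelev-type statement, which is the main tool throughout: for every cone $\sigma\in\Sigma_{X'}$ with associated stratum $E_\sigma$,
\[
\overline{Y}'\cap E_\sigma\neq\emptyset \quad\Longleftrightarrow\quad \Trop_{X'}(Y)\cap\operatorname{relint}(\sigma)\neq\emptyset .
\]
I would prove this using the description of $\Trop_{X'}(Y)$ as the image of $(\overline{Y}')^{\mathrm{an}}$ under the continuous tropicalization map $\trop_{X'}$ attached to the log structure, together with the compatibility of $\trop_{X'}$ with the reduction map of the Thuillier analytification: a point of $(X')^{\mathrm{an}}$ reduces into $E_\sigma$ exactly when its image under $\trop_{X'}$ lies in $\operatorname{relint}(\sigma)$. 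Concretely, a point of $E_\sigma$ lies on $\overline{Y}'$ if and only if it is the specialization, over some valued extension of the trivially valued field $k$, of a point of $Y$ whose tropicalization lies in $\operatorname{relint}(\sigma)$; the anti-continuity of reduction gives one implication, and surjectivity of reduction onto the proper scheme $\overline{Y}'$ together with the structure theorem underlying Theorem~\ref{thm_BieriGroves} gives the other. Granting the lemma, $\overline{Y}'$ avoids every boundary stratum if and only if $\Trop_{X'}(Y)$ meets no $\operatorname{relint}(\sigma)$ with $\sigma\in\Sigma_{X'}\smallsetminus\Sigma_X$; since every point of $\Sigma_{X'}$ lies in the relative interior of a unique cone and $\Sigma_X$ is a closed subcomplex, this happens if and only if $\Trop_{X'}(Y)\subseteq\Sigma_X$, which proves (i).

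For (ii), the hypothesis $\Trop_{X'}(Y)\supseteq\Sigma_X$ together with the identity $\Trop_X(Y)=\Trop_{X'}(Y)\cap\Sigma_X$ yields $\Trop_X(Y)=\Sigma_X$. Thus for each stratum $E=E_\sigma$ of $X$ we have $\operatorname{relint}(\sigma)\subseteq\Trop_X(Y)\subseteq\Trop_{X'}(Y)$, and the key lemma gives $\overline{Y}\cap E\neq\emptyset$. To compute the dimension I would pass to the étale-local toric model provided by Kato's structure theorem: near a point of $E_\sigma$ the pair $(X,E_\sigma)$ is modeled on $\bigl(E_\sigma\times V(\sigma),\,E_\sigma\times O_\sigma\bigr)$, where $V(\sigma)$ is the affine toric variety of $\sigma$ with dense torus $T_\sigma$ and deepest orbit $O_\sigma$, and where $X_0$ corresponds to $E_\sigma\times T_\sigma$. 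By the functoriality of tropicalization \cite{Ulirsch_functroplogsch} the map $\trop_X$ becomes, in this chart, the projection onto $V(\sigma)$ followed by the toric tropicalization $T_\sigma\to\sigma$, so that the study of $\overline{Y}\cap E$ turns into the toric problem of intersecting the closure of a subvariety of $E_\sigma\times T_\sigma$ with the deepest boundary stratum $E_\sigma\times O_\sigma$.

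The dimension statement then splits into two inequalities, and the upper bound $\dim(\overline{Y}\cap E)\leq\dim Y-\codim_X E$ is the step I expect to be the main obstacle. The difficulty is structural: tropicalization is blind to the directions along $E_\sigma$, so a priori $\overline{Y}$ could meet $E_\sigma\times O_\sigma$ in excess dimension coming from the base $E_\sigma$, and the hypothesis $\Trop_X(Y)=\Sigma_X$ must be used to exclude exactly this. My plan is to regard $\overline{Y}$ in the toric chart as a family over $E_\sigma$ and apply Tevelev's toric dimension formula $\dim(\overline{Z}\cap O_\sigma)=\dim Z-\dim\sigma$ to control the fibers in the $V(\sigma)$-direction, where $\dim\sigma=\codim_X E$; combining this with the lower bound---which follows from Krull's height theorem, since $\overline{E}$ is étale-locally cut out by the functions vanishing along it---yields the asserted equality. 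Making the fiberwise count uniform over $E_\sigma$, and checking that the toric chart is compatible with $\Trop_X$ on the nose, is where the real work lies, and both rest on the functoriality and properness of the tropicalization map from \cite{Ulirsch_functroplogsch} rather than on any new tropical ingredient. The reduction to irreducible $Y$, hence to a single value of $\dim Y$, is harmless and may be made at the outset.
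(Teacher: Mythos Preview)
Your treatment of Part (i) is correct and coincides with the paper's: both hinge on the Tevelev-type lemma (Lemma \ref{lemma_Tevelevslemma}) relating $\overline{Y}'\cap E_\sigma\neq\emptyset$ to $\Trop_{X'}(Y)\cap\mathring{\sigma}\neq\emptyset$, and then use that $\overline{Y}$ is proper iff it equals $\overline{Y}'$ iff it avoids the boundary strata.

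For Part (ii) your route diverges from the paper's, and the divergence creates the very difficulty you flag. You propose modeling $X$ \'etale-locally as a product $E_\sigma\times V(\sigma)$, so that only the $T_\sigma$-direction is visible to tropicalization; you then have to worry about excess dimension along $E_\sigma$ and resort to a fiberwise argument you do not complete. The paper avoids this entirely. Its $\xi$-small toric chart $\gamma\colon U\to Z=\Spec k[P]$ is an \'etale map into a toric variety $Z$ of the \emph{same} dimension as $X$, with big torus $T=\Spec k[P^{gp}]$; the stratum $E$ goes to the deepest $T$-orbit of $Z$, and---crucially---one tropicalizes $\gamma(Y\cap U)$ in the full torus $T$, not in the quotient $T_\sigma$. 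Proposition \ref{prop_Tropchart} then gives $\Trop(\overline{Y}^T)\supseteq\Sigma_Z=\sigma$. One now chooses a fan $\Delta\supseteq\sigma$ with $\Trop(\overline{Y}^T)=\Delta$, so that by Part (i) the closure of $\overline{Y}^T$ in the toric variety $Z'(\Delta)$ is proper, and cites \cite[Theorem 2.4]{Hacking_homology} directly to get the expected-dimension intersection with the $T$-orbit corresponding to $\sigma$. Since $\gamma$ is \'etale this bounds $\dim(\overline{Y}\cap E\cap U)$ from above, and covering by such charts finishes.

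In short, the ``blindness to directions along $E_\sigma$'' you worry about is an artifact of projecting to $V(\sigma)$ prematurely. If you keep the full torus $T$ in the chart, the problem reduces verbatim to the classical toric statement, and no fiberwise uniformity argument is needed.
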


Suppose now that $X=X(\Delta)$ is a toric variety defined by a rational polyhedral fan $\Delta$ with big torus $T$ and $X'=X'(\Delta')$ a toric compactification of $X$, i.e. given by $\Delta$ being a subfan of a complete fan $\Delta'$. We are going to see in Proposition \ref{prop_troptoric} that the tropicalization $\Trop_{X'}(Y)$ of a closed subset $Y\subseteq T$ coincides with the usual tropicalization $\Trop(Y)$ of $Y$ as defined e.g. in \cite{Gubler_guide}. Moreover we will see in Example \ref{example_conecomplex=fan} the cone complexes $\Sigma_X$ and $\Sigma_{X'}$ can be naturally identified with $\Delta$ and $\Delta'$. 

In this case Theorem \ref{thm_tropcomplogreg} (i) reads as follows: The closure $\overline{Y}$ of $Y$ in $X$ is proper over $k$ if and only if $\Trop(Y)\subseteq\Delta$. We note that this is exactly the fundamental Proposition 2.3 of \cite{Tevelev_tropcomp}. Part (i) and (ii) of Theorem \ref{thm_tropcomplogreg} now imply the following: If $\Trop(Y)=\Delta$, then $\overline{Y}$ intersects every orbit of $X$ and the intersection has the expected dimension. This is one direction of \cite[Theorem 2.4]{Hacking_homology}. 

Using Nagata's embedding theorem and logarithmic resolution of singularities we obtain the following application of our results.

\begin{theorem}\label{thm_compsubvar}
Suppose $k$ is of characteristic $0$. Let $Y$ be a closed subset of a smooth variety $X_0$.  Then there is a partial log-regular compactification $X$ of $X_0$ such that 
\begin{enumerate}[(i)]
\item the closure $\overline{Y}$ of $Y$ in $X$ is proper over $k$, and 
\item the intersection of $\overline{Y}$ with every stratum $E$ of $X$ is non-empty and has the expected dimension. 
\end{enumerate}
\end{theorem}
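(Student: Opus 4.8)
The plan is to reduce the theorem to an application of Theorem 1.2 (thm_tropcomplogreg) by producing a log-regular compactification whose cone complex is, in a suitable sense, filled out by the tropicalization of $Y$. First I would use Nagata's embedding theorem to find a (possibly singular) compactification $\overline{X_0}$ of $X_0$; since we are in characteristic $0$, logarithmic resolution of singularities (in the sense of Kato, or Abramovich--Karu--Matsuki--W{\l}odarczyk) produces a log-regular variety $X'$ together with a proper birational morphism $X' \to \overline{X_0}$ which is an isomorphism over $X_0$ and such that the boundary $X' - X_0$ is the support of a simple normal crossings divisor. Thus $X_0$ is the non-singular open stratum of the log-regular variety $X'$, and $Y$ is a closed subset of $X'_0 = X_0$. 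By Theorem 1.1, $\Trop_{X'}(Y)$ is a finite rational polyhedral cone complex sitting inside $\Sigma_{X'}$.

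The key geometric input is now a \emph{toroidal modification} step: starting from $X'$, I would subdivide the cone complex $\Sigma_{X'}$ so that $\Trop_{X'}(Y)$ becomes a subcomplex of the subdivided complex. This is the analogue, in the log-regular setting, of the standard fact that a tropical variety can be realized as a subfan of a sufficiently fine fan refinement; such subdivisions of $\Sigma_{X'}$ correspond to log-regular (log-étale, in fact log-smooth) modifications $X'' \to X'$ that are isomorphisms over $X_0$, by the dictionary between subdivisions of cone complexes and toroidal modifications established in Kato's work (and used in \cite{Ulirsch_functroplogsch}). After this modification we have a log-regular compactification $X''$ of $X_0$ with $\Sigma_{X''}$ a subdivision of $\Sigma_{X'}$, and $\Trop_{X''}(Y) = \Trop_{X'}(Y)$ now equal to a union of faces of $\Sigma_{X''}$.

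Finally, I would take $X$ to be the partial compactification of $X_0$ obtained from $X''$ by discarding exactly those strata of $X''$ whose corresponding cones do \emph{not} lie in $\Trop_{X''}(Y)$; concretely, $X$ is the open subscheme of $X''$ which is the union of $X_0$ with the strata indexed by the cones of $\Trop_{X''}(Y)$. This is again a (partial) log-regular variety, its cone complex is $\Sigma_X = \Trop_{X''}(Y)$, and by the compatibility $\Trop_X(Y) = \Trop_{X''}(Y) \cap \Sigma_X$ noted just before Theorem 1.2, together with the observation that $\Trop$ relative to the compactification $X''$ computes $\Trop_{X}(Y)$ as well, we get $\Trop_{X''}(Y) = \Sigma_X$, i.e. both inclusions $\Trop_{X''}(Y) \subseteq \Sigma_X$ and $\Trop_{X''}(Y) \supseteq \Sigma_X$ hold. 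Theorem 1.2(i) then gives that $\overline{Y}$ is proper over $k$, and Theorem 1.2(ii) gives that $\overline{Y}$ meets every stratum $E$ of $X$ in the expected dimension $\dim Y - \codim_X E$, which is (i) and (ii) of the theorem.

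The main obstacle I expect is the subdivision step: one must check carefully that $\Trop_{X'}(Y)$ is a rational polyhedral \emph{sub}cone complex that can actually be realized by a subdivision of $\Sigma_{X'}$ — i.e. that it is compatible with the integral/lattice structure on the cones of $\Sigma_{X'}$ — and that the resulting subdivision corresponds to an honest log-regular (in particular log-smooth, hence nicely singular) modification of $X'$ rather than something worse. One also needs that, after throwing away cones, the open subscheme $X$ one is left with is still \emph{log-regular} as a log scheme in its own right (with the induced log structure) and that passing to the open part does not change the tropicalization of $Y$; both should follow from the local, chart-by-chart description of log-regularity and of $\Trop$, but this is where the technical care is concentrated.
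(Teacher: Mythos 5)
Your proposal is correct and follows essentially the same route as the paper: Nagata plus log resolution to get a log-regular SNC compactification, a toroidal modification realizing $\Trop(Y)$ as a subcomplex (using the integral polyhedral structure from Theorem \ref{thm_BieriGroves}), discarding the strata whose cones lie outside the tropicalization, and then invoking both parts of Theorem \ref{thm_tropcomplogreg}. The technical concerns you flag at the end (integrality of the subdivision, log-regularity of the open part) are exactly the points the paper's proof relies on, the first being supplied by Theorem \ref{thm_BieriGroves} and the second by the local nature of log-regularity.
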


Suppose now that $k=\C$. In \cite{Hacking_homology} Hacking exhibits that, in good cases, the simplicial cohomology of the link of a tropical variety recovers the weight zero part of the cohomology in the sense of \cite{Deligne_hodgeII}. In our situation the following result holds.

\begin{theorem}\label{thm_logregW0}
Let $X$ be a log-regular variety over $\C$. Then there are natural isomorphisms
\begin{equation*}
\tilde{H}^{i-1}\big(\Link(\Sigma_X),\Q\big)\simeq W_0H_c^i(X_0,\mathbb{Q})
\end{equation*}
for all $i\geq 0$. 
\end{theorem}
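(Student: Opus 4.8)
The plan is to reduce the statement to the toroidal case treated by Hacking and Payne–Abramovich–et al., via the dual-complex interpretation of $\Sigma_X$. First I would recall that for a log-regular variety $X$ over $\C$, the cone complex $\Sigma_X$ plays the role of the dual complex of the boundary $X - X_0$: its cones of dimension $r$ correspond to codimension-$r$ strata, with face relations given by specialization, exactly as in Kato's construction. Choosing (by logarithmic resolution, using that we are in characteristic $0$) a log-regular — indeed toroidal, or even a compactified snc — modification $\widetilde{X} \to X$ that is an isomorphism over $X_0$, one gets a subdivision $\Sigma_{\widetilde{X}} \to \Sigma_X$ which is a homotopy equivalence on links, and $\widetilde{X}_0 = X_0$. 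Thus both sides of the claimed isomorphism are unchanged, and I may assume the boundary $D = X - X_0$ is a simple normal crossings divisor in a smooth variety $X$ (after possibly passing to a further modification and an open subset, as in Theorem 1.5; note that $X$ itself need not be complete, which is why $W_0 H^i_c$ rather than $W_0 H^i$ appears).

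Next I would set up the weight spectral sequence for $H^\bullet_c(X_0)$ associated to the snc compactification. Writing $D^{(p)}$ for the disjoint union of $p$-fold intersections of components of $D$ (with $D^{(0)} = X$), the Gysin/weight spectral sequence has $E_1$-page built from $H^\bullet_c(D^{(p)})$, and the weight-zero piece $W_0 H^i_c(X_0)$ is computed by the single row of weight-zero classes. Concretely, $W_0 H^i_c(X_0, \Q)$ is the $i$-th cohomology of the complex $\big(\bigoplus_p H^0_c(D^{(p)}, \Q)[-p], d\big)$ where $d$ alternates restriction maps — but $H^0_c$ of a (possibly non-proper) variety is $0$ unless the variety is proper, so only the proper strata contribute. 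This is precisely the reduced simplicial (co)chain complex of $\Link(\Sigma_X)$: the rays of $\Sigma_X$ index components of $D$, a $p$-simplex of the link corresponds to a nonempty $p$-fold intersection $D_I$, and the differential is the simplicial coboundary. I would make this identification explicit and check the signs match, so that $\tilde H^{i-1}(\Link(\Sigma_X), \Q) \cong W_0 H^i_c(X_0, \Q)$, with the degree shift by one coming from the cone-versus-link convention (the cone point contributes the augmentation).

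The remaining work is to verify that these identifications are natural — independent of the chosen resolution and functorial for log-regular morphisms inducing maps of cone complexes — which follows because any two snc models are dominated by a common one and subdivisions induce the identity on both $W_0 H^\bullet_c$ (by invariance of weight filtration under proper birational maps that are isomorphisms on $X_0$) and on reduced cohomology of the link (subdivisions of cone complexes are homeomorphisms after taking links). I would also need to handle the case where $X$ is only a \emph{partial} compactification: here one first compactifies $X$ to a log-regular \emph{complete} $X'$, applies the above to $X'$, and then uses $X_0 = X'_0$ together with the fact that $\Sigma_X \subseteq \Sigma_{X'}$ — but actually $X_0$ only depends on the smooth locus, not on how much of the boundary we have added, so the statement for $X$ and for $X'$ coincide and no extra argument is needed. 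The main obstacle I anticipate is purely bookkeeping: getting the weight spectral sequence in the \emph{compactly supported} setting with the correct indexing so that the weight-zero row is exactly the augmented simplicial cochain complex of the link, and tracking that all the comparison maps are compatible with the face maps of $\Sigma_X$; the geometric input (log resolution, invariance of weights) is standard in characteristic $0$.
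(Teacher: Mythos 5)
Your overall strategy coincides with the paper's: pass to a toroidal modification $X'\to X$ with $X'$ smooth (the paper invokes Kato's Proposition 9.8, resting on the KKMSD subdivision theorem, rather than a general log resolution), observe that $\Link(\Sigma_{X'})$ and $\Link(\Sigma_X)$ agree because subdivisions do not change the link, identify $\Link(\Sigma_{X'})$ with the dual complex of the snc boundary $D'=X'-X_0$ via the strata--cone correspondence, and then identify $\tilde{H}^{i-1}$ of that dual complex with $W_0H^i_c(X_0,\Q)$. The only real difference is that the paper quotes this last identification as Deligne's Lemma (citing the proof of Payne's Theorem 4.4), whereas you unfold it via the weight spectral sequence whose weight-zero row is the augmented simplicial cochain complex of the dual complex. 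That unfolding is correct and is exactly what the cited proof does; it buys self-containedness at the cost of the sign and indexing bookkeeping you mention.

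There is, however, one concrete misstep: your closing claim that for a non-complete $X$ ``the statement for $X$ and for $X'$ coincide and no extra argument is needed.'' The right-hand side $W_0H^i_c(X_0,\Q)$ is indeed unchanged when $X$ is enlarged to a complete $X'$, but the left-hand side is not: $\Sigma_X$ is in general a proper subcomplex of $\Sigma_{X'}$, so their links differ. For $X=\mathbb{A}^1\supset X_0=\mathbb{G}_m$ the link $\Link(\Sigma_X)$ is a single point, hence $\tilde{H}^0\big(\Link(\Sigma_X),\Q\big)=0$, while $W_0H^1_c(\mathbb{G}_m,\Q)=\Q=\tilde{H}^0\big(\Link(\Sigma_{\mathbb{P}^1}),\Q\big)$. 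The weight spectral sequence must be run on a \emph{complete} snc model, and its weight-zero row is the cochain complex of the dual complex of the full boundary of that complete model; matching this with $\Link(\Sigma_X)$ therefore requires $X$ itself to be proper (equivalently, all intersections $D^{(p)}$ proper --- your remark that only proper strata contribute to $H^0_c$ points at exactly this issue but does not resolve it, since the non-proper strata still index cones of $\Sigma_X$). The paper's own proof tacitly makes the same properness assumption when it invokes Deligne's Lemma, so your argument is no worse off, but you should either add properness as a hypothesis or restrict the claimed isomorphism to the complete case rather than assert that no extra argument is needed.
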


Now consider a \emph{sch\"on subvariety} $Y$ of a split algebraic torus $T$ over $\C$, as defined in \cite[Definition 1.3]{Tevelev_tropcomp}. By \cite[Theorem 1.4]{Tevelev_tropcomp} this means that for every tropical compactification $\overline{Y}$ in a $T$-toric variety $X$ the multiplication map 
\begin{equation*}
\mu\mathrel{\mathop:}T\times\overline{Y}\longrightarrow X
\end{equation*}
is smooth. Since $X$ is log-regular and $\mu$ is smooth, \cite[Theorem 8.2]{Kato_toricsing} implies that $Y$, endowed with the logarithmic structure induced from $X$, is log-regular. Therefore we obtain the following Corollary \ref{cor_schoenW0}. 

\begin{corollary}\label{cor_schoenW0}
If $Y$ is a sch\"on variety, then there are natural isomorphisms
\begin{equation*}
\tilde{H}^{i-1}\big(\Link(\Sigma_{\overline{Y}}),\Q\big)\simeq W_0H_c^i(Y,\mathbb{Q})
\end{equation*}
for all $i\geq 0$. 
\end{corollary}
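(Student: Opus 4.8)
The plan is to apply Theorem \ref{thm_logregW0} directly to $\overline{Y}$, regarded as a log-regular variety. By \cite[Theorem 1.4]{Tevelev_tropcomp} the schönness of $Y$ produces a tropical compactification $\overline{Y}$ inside some $T$-toric variety $X$ for which the multiplication map $\mu\colon T\times\overline{Y}\to X$ is smooth; I would fix such a choice. As explained in the paragraph preceding the statement, it then follows from \cite[Theorem 8.2]{Kato_toricsing} that $\overline{Y}$, endowed with the logarithmic structure $M_{\overline{Y}}$ induced from the toric log structure on $X$, is a log-regular variety over $\C$; in particular it has a canonically associated cone complex $\Sigma_{\overline{Y}}$ and a dense non-singular open subset $(\overline{Y})_0$.

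Next I would check that $(\overline{Y})_0$, the locus on which $M_{\overline{Y}}$ is trivial, coincides with $Y$. Since $\overline{Y}\hookrightarrow X$ is a closed immersion, the characteristic sheaf $\overline{M}_{\overline{Y}}=M_{\overline{Y}}/\calO_{\overline{Y}}^{\ast}$ is the restriction of $\overline{M}_X$; hence $M_{\overline{Y}}$ is trivial exactly over the preimage of $X_0=T$, that is over $\overline{Y}\cap T$, which equals $Y$ because $Y$ is already closed in $T$. Thus $(\overline{Y})_0=Y$.

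Feeding these identifications into Theorem \ref{thm_logregW0}, applied with the log-regular variety taken to be $\overline{Y}$, gives natural isomorphisms
\begin{equation*}
\tilde{H}^{i-1}\big(\Link(\Sigma_{\overline{Y}}),\Q\big)\simeq W_0H_c^i\big((\overline{Y})_0,\Q\big)=W_0H_c^i(Y,\Q)
\end{equation*}
for all $i\geq 0$, the naturality being inherited from that of the isomorphism in Theorem \ref{thm_logregW0}. I do not expect a real obstacle here beyond bookkeeping: the one point that genuinely requires the input of \cite{Tevelev_tropcomp} and \cite{Kato_toricsing} is that the log structure \emph{induced from $X$} — as opposed, say, to the divisorial log structure of the boundary $\overline{Y}\setminus Y$ — makes $\overline{Y}$ log-regular, after which everything is formal. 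As a byproduct one sees that $\tilde{H}^{\bullet-1}\big(\Link(\Sigma_{\overline{Y}}),\Q\big)$ does not depend on the chosen tropical compactification, since the right-hand side depends only on $Y$.
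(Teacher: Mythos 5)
Your proposal is correct and follows essentially the same route as the paper: the argument given in the introduction preceding the statement is precisely that schönness plus \cite[Theorem 8.2]{Kato_toricsing} makes $\overline{Y}$ log-regular with the log structure induced from $X$, after which Theorem \ref{thm_logregW0} applies. Your additional verification that the open stratum $(\overline{Y})_0$ equals $Y$ is a detail the paper leaves implicit, and you handle it correctly.
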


In the above situation by \cite[Theorem 1.1]{Ulirsch_functroplogsch} we have a piecewise $\Z$-linear morphism $\Sigma_{\overline{Y}}\rightarrow \Sigma_X$, whose image is $\Trop_X(Y)$. In general, this morphism is not injective (see \cite[Example 7.4]{Ulirsch_functroplogsch}). But in the special situation that the (necessarily smooth) intersections of $\overline{Y}$ with the $T$-orbits in $X-T$ are irreducible the cone complex $\Sigma_Z$ is isomorphic to $\Trop_X(Y)$. In this case Corollary \ref{cor_schoenW0} reduces to Hacking's Theorem \cite[Theorem 2.5]{Hacking_homology}, namely that there are natural isomorphisms 
\begin{equation*}
\tilde{H}^{i-1}\big(\Link\big(\Trop_X(Y)\big),\Q\big)\simeq W_0H_c^i(Y,\mathbb{Q}) \ .
\end{equation*}

Our approach to the proof of Theorem \ref{thm_logregW0} makes use of the theory of dual complexes of simple normal crossing compactifications of smooth varieties as developed in \cite{Payne_boundarycomplexes}. Suppose now that in the situation of Corollary \ref{cor_schoenW0} the tropical compactification $\overline{Y}$ is smooth. In this case $D=\overline{Y}-Y$ has simple normal crossings and the tropical variety $\Trop_{\overline{Y}}(Y)=\Sigma_Y$ contains the full dual complex $\Delta(Y,D)$ of $D$ as its link (see Lemma \ref{lemma_dualcomplex=link}), while in $\Trop_X(Y)$ the connected components in the intersection of $\overline{Y}$ with a $T$-orbit of $X$ are collapsed to one cone. This explains why Hacking's Theorem \cite[Theorem 2.5]{Hacking_homology} needs the extra assumption that all intersections of $\overline{Y}$ with the $T$-orbits in $X-T$ are irreducible. 

Further details of our approach to tropicalization are explained in \cite{Ulirsch_functroplogsch}, which also treats the more general situation of fine and saturated \'etale logarithmic schemes and, in particular, the case of self-intersection. It is closely related to Thuillier's work \cite{Thuillier_toroidal} on the non-Archimedean geometry of toroidal embeddings, as well as to Popescu-Pampu and Stepanov's work \cite{PopescuPampu_localtrop} on local tropicalization. For the former we also refer the reader to \cite[Section 2 and 5]{AbramovichCaporasoPayne_tropmoduli}.

An alternative approach to dual complexes and its relation to the weight zero parts of the weight filtration can be found in \cite{ArapuraBakhtaryWlodarczyk_weightsdualcomplex}. Weight zero parts of weight filtration and its geometric interpretation have also been studied in \cite{Berkovich_Tateanalogue} and \cite{Berkovich_weightzero} from the point of view of Berkovich analytic geometry. 

A closely related construction is the \emph{parametrizing complex} in \cite[Section 4]{HelmKatz_monodromy} and in \cite[Section 2]{KatzStapledon_motivicnearbyfiber} of a tropical degeneration of a sch\"on variety $Y$ over a non-trivially valued field. It arises as the dual complex of the special fiber and its cohomology is naturally isomorphic to the weight zero part of the \'etale cohomology of $Y$. The main difference from our approach is that the authors of \cite{HelmKatz_monodromy} and \cite{KatzStapledon_motivicnearbyfiber} work over a non-trivially valued field and, in \cite{HelmKatz_monodromy}, use the weight filtration on the \'etale cohomology of $Y$. Although the situation considered in \cite{HelmKatz_monodromy} and \cite{KatzStapledon_motivicnearbyfiber} and the theory developed in this article are seemingly different, since we are only working in the constant coefficient case, the author expects that a theory of tropicalization for log-smooth varieties over valuation rings of rank one will be the right framework to generalize results such as \cite[Theorem 6.1]{HelmKatz_monodromy} to the logarithmic setting.

\subsection{Conventions}

We work over an algebraically closed field $k$ that is endowed with the trivial norm. A variety $X$ over $k$ is a separated integral scheme of finite type over $k$. A logarithmic structure on $X$ will always be defined in the Zariski topology. 

We can associate to $X$ its analytification $X^{an}$ in the sense of Berkovich \cite{Berkovich_book}, but also the analytic domain $X^\beth$ in $X^{an}$ defined by Thuillier in \cite{Thuillier_toroidal}. If $X=\Spec A$ is affine $X^{an}$ is the set of seminorms $\vert.\vert_x$ on $A$ extending the trivial norm on $k$, and $X^\beth$ is the subspace of bounded seminorms, i.e. seminorms $\vert.\vert_x$ that fulfill $\vert a\vert_x\leq 1$ for all $a\in A$. There is a continuous structure morphism $\rho\mathrel{\mathop:}X^{an}\rightarrow X$ and an anti-continuous reduction map $r\mathrel{\mathop:}X^\beth\rightarrow X$. If $X=\Spec A$ is affine, the map $\rho$ sends $x\in X^{an}$ to the prime ideal $\ker\vert.\vert_x$ in $A$ and $r$ sends $x\in X^\beth$ to the prime ideal $\big\{a\in A\big\vert \vert a\vert < 1\big\}$.

We write $\R_{\geq 0}$ for the additive monoid of non-negative real numbers and $\Rbar_{\geq 0}$ for its extension $\R_{\geq 0}\cup\{\infty\}$ with the obvious addition rules. A monoid $P$ is said to be \emph{fine}, if it is finitely generated and the canonical homomorphism $P\rightarrow P^{gp}$ into the associated group is injective. In this case $P$ is said to be \emph{saturated} if $n\cdot p\in P$ for some $p\in P^{gp}$ and $n\in\N_{>0}$ already implies $p\in P$. A fine and saturated monoid $P$ is called \emph{toric}, if it contains no non-trivial torsion elements, and \emph{sharp}, if it contains no non-trivial units. The subgroup of \emph{units} of a monoid $P$ will be denoted by $P^\ast$. 

A \emph{locally monoidal space} is a topological space $X$ together with a sheaf of monoids $\calO_X$. A \emph{morphism} of monoidal spaces consists of a continuous map $f\mathrel{\mathop:}X\rightarrow Y$ and a morphism of sheaves of monoids $f^\dagger\mathrel{\mathop:}f^\ast\calO_Y\rightarrow \calO_X$ such that $(f^\dagger)_x(\mathfrak{m}_{Y,f(x)})\subseteq\mathfrak{m}_{X,x}$ for all $x\in X$, where $\mathfrak{m}_{X,x}=\calO_{X,x}-\calO_{X,x}^\ast$ and $\mathfrak{m}_{Y,y}=\calO_{Y,y}-\calO_{Y,y}^\ast$ denote the unique maximal ideals of $\calO_{X,x}$ and $\calO_{Y,y}$ respectively. One can think of the notion of a locally monoidal space as a generalization of the notion of a locally ringed space. In this article every locally ringed space $(X,\mathcal{O}_X)$ will be implicitly thought of as a monoidal space with respect to multiplication on the structure sheaf.
A locally monoidal space $(X,\calO_X)$ is said to be \emph{sharp}, if the monoid $\calO_{X,x}$ is sharp for $x\in X$. Note that we can functorially associate to every monoidal space $(X,\calO_X)$ a sharp monoidal space $(X,\overline{\calO}_X)$ by setting $\overline{\calO}_X=\calO_X/\calO_X^\ast$.

\subsection{Acknowledgements}

The author would like to express his gratitude to Dan Abramovich for his constant support and encouragement. He would also like to thank Amaury Thuillier for suggesting the use of Kato fans and many insightful remarks on an earlier version of this paper. Thanks are also due to Walter Gubler for suggesting the technique used in the proof of Lemma \ref{lemma_tropopendense}, to Jenia Tevelev for sharing his unpublished notes on the subject of \cite{Tevelev_tropcomp}, and to the anonymous referee for several helpful remarks that significantly improved the readability of this article. The author also profited from conversations with Kenny Ascher, Lorenzo Fantini, Jeffrey Giansiracusa, Noah Giansiracusa, Angela Gibney, Diane MacLagan, Samouil Molcho, Johannes Nicaise, Sam Payne, Michael Temkin, and Jonathan Wise. 

Parts of this research have been carried out while enjoying the hospitality of Hebrew University, Jerusalem, and the University of Regensburg.


\section{Log-regular varieties}

\subsection{Log-regular varieties} Fix an algebraic variety $X$ together with a fine and saturated logarithmic structure $(M_X,\alpha_X)$ in the sense of \cite{Kato_logstr} consisting of a sheaf $M_X$ of monoids on $X$ and a morphism $\alpha\mathrel{\mathop:}M_X\rightarrow\mathcal{O}_X$ inducing an isomorphism $\alpha^{-1}(\mathcal{O}_X^\ast)\simeq\mathcal{O}_X^\ast$. For $x\in X$ denote by $I(M_X,x)$ the ideal in $\mathcal{O}_{X,x}$ generated by the image of $M_{X,x}-M_{X,x}^\ast$. 

Recall from \cite{Kato_toricsing} that $X$ is said to be \emph{logarithmically regular} (or short: \emph{log-regular}), if for all $x\in X$:
\begin{enumerate}[(i)]
\item the quotient $\mathcal{O}_{X,x}/I(M_X,x)$ is a regular local ring, and
\item we have $\dim\big(\mathcal{O}_{X,x}\big)=\dim\big(\mathcal{O}_{X,x}/I(M_X,x)\big) +\rk\big(M_x^{gp}/\mathcal{O}_{X,x}^\ast\big)$. 
\end{enumerate}
Since $k$ is perfect, by \cite[Proposition 8.3]{Kato_toricsing} this is equivalent to $X$ being logarithmically smooth over $\Spec k$ endowed with the trivial logarithmic structure $k^\ast$. 

\begin{example}
Let $N$ be a free and finitely generated abelian group. Consider a toric variety $X=X(\Delta)$ defined by a rational polyhedral fan $\Delta$ in $N_\mathbb{R}=N\otimes_\mathbb{Z}\mathbb{R}$ with big torus $T=N\otimes_\mathbb{Z}\mathbb{G}_m$, as explained in \cite{Fulton_toricvarieties}. The homomorphism $P\rightarrow k[P]$ on a $T$-invariant open affine subset $\Spec k[P]$ of $X$, for a toric moniod $P$ defined by a cone in $\Delta$, are compatible with restriction and induce a logarithmic structure $M_\Delta$ on $X$ making $X$ log-regular by \cite[Proposition 3.4]{Kato_logstr}. 
\end{example}

\begin{example}\label{example_divlogstr}
Let $X$ be a smooth variety and $D$ a divisor on $X$ with simple normal crossings. Then the \emph{divisorial logarithmic structure} $M_D$ associated to $D$, defined by
\begin{equation*}
M_D(U)=\big\{f\in\mathcal{O}(U)\big\vert f\vert_{U-D}\in\mathcal{O}_X^\ast(U-D)\big\}
\end{equation*}
on an open subset $U$ of $X$ makes $X$ into a log-regular variety by \cite[Proposition 4.7]{Thuillier_toroidal}. 
\end{example}

\subsection{Kato fans and characteristic morphism} Suppose that $X$ is log-regular. Consider the subspace \begin{equation*}
F_X=\big\{\xi\in X\big\vert I(M_X,\xi)=\mathfrak{m}_{X,\xi}\big\}
\end{equation*}
of $X$, where $\mathfrak{m}_{X,\xi}$ denotes the unique maximal ideal in the local ring $\mathcal{O}_{X,\xi}$. Endow $F_X$ with the sheaf $\mathcal{O}_{F_X}$ that is the restriction of $M_X/\mathcal{O}_{X}^\ast\simeq M_X/M_X^\ast$ to $F_X\subseteq X$. There is  a morphism 
\begin{equation*}
\phi_X\mathrel{\mathop:}(X,\overline{\mathcal{O}}_X)\longrightarrow F_X \ ,
\end{equation*} called the \emph{characteristic morphism} of $X$, that is given by sending $x\in X$ to the point $\phi(x)\in F_X$ corresponding to the prime ideal $I(M_X,x)$ in $\mathcal{O}_{X,x}$. Here we write $\overline{\mathcal{O}}_X$ for the sheaf $\mathcal{O}_X/\mathcal{O}_X^\ast$.  

By \cite[Proposition 10.1]{Kato_toricsing} $F_X$ has the structure of a fine and saturated \emph{Kato fan}, i.e. $F_X$ is a quasicompact sharp monoidal space in which every point $\xi\in F_X$ is contained in an \emph{affine open subset}, i.e. in an open subset $U\subseteq F_X$ that is isomorphic to the \emph{spectrum} $\Spec P$ of a fine and saturated monoid $P$. The spectrum $\Spec P$ is the set of prime ideals $\mathfrak{p}$ in $P$, endowed with the topology generated by the sets 
\begin{equation*}
D(f)=\big\{\mathfrak{p}\in\Spec P\big\vert f\notin\mathfrak{p}\big\}
\end{equation*}
 for $f\in P$ and a structure sheaf determined by 
\begin{equation*}
D(f)\mapsto P_f/P_f^\ast \ ,
\end{equation*}
where $P_f$ denotes the localization of $P$ with respect to the submonoid $\{n\cdot f\vert n\in\mathbb{N}\}$. We refer to \cite[Section 9]{Kato_toricsing} as well as to \cite[Section 3.5]{GabberRomero_foundationsalmostring} and \cite[Section 3]{Ulirsch_functroplogsch} for further details on the theory of Kato fans. 

\begin{example}
If $X$ is a toric variety with big torus $T$, then $F_X$ is the set of generic points of $T$-orbits in $X$. The characteristic morphism sends a point $x$ in $X$ to the generic point of the unique $T$-orbit containing $x$. Moreover, if $X=\Spec k[P]$ is an affine toric variety, defined by a toric monoid $P$, then $F_X=\Spec P$ and $\phi_X$ is induced by the homomorphism $P\rightarrow k[P]$. 
\end{example}

\subsection{Toroidal stratification}
The points in $F_X$ define a stratification of $X$ by locally closed subsets. Its strata are given by
\begin{equation*}
E(\xi)\mathrel{\mathop:}=\overline{\{\xi\}}-\bigcup_{\xi\rightarrow \xi'}\overline{\{\xi'\}}
\end{equation*}
for $\xi\in F_X$, where $\xi'$ runs through all strict specializations of $\xi$ in $F_X$ (written as $\xi\rightarrow \xi'$) excluding $\xi$ itself and $\overline{\{x\}}$ denotes the closure of a point $x\in X$. Write $X_0$ for the unique open stratum. It is generated by the generic point of $X$.

Given $\xi\in F_X$, a $\xi$-\emph{small} open subset $U$ in $X$ is an open neighborhood $U$ of $\xi$ that is contained in 
\begin{equation*}
X(\xi)=\bigcup_{\xi''\rightarrow\xi}E(\xi'') \ ,
\end{equation*}
where $\xi''$ runs through all $\xi''\in F_X$ that specialize to $\xi$, including $\xi$ itself. 

\begin{example}
If $X=X(\Delta)$ is a toric variety with big torus $T$, then the strata of $X$ are precisely the $T$-orbits. A covering by small open neighborhoods is given by the $T$-invariant open affine subsets of $X$ corresponding to cones $\sigma\in\Delta$.  
\end{example}
 
\subsection{Toric charts}
We are now going to apply Kato's criterion for log-smoothness \cite[Theorem 3.5]{Kato_logstr} (also see \cite[Chapter 4]{Kato_logsmoothdeformation}): Since $k$ is perfect, for every point $x\in X$ there is an open neighborhood $U$, a toric monoid $P$, and a chart $P\rightarrow\mathcal{O}_U$ of the logarithmic structure inducing an \'etale morphism $\gamma\mathrel{\mathop:}U\rightarrow Z$ into the toric variety $Z=\Spec k[P]$ over the big torus $T=\Spec k[P^{gp}]$. Note that $X$ is normal by \cite[Theorem 4.1]{Kato_toricsing}. Since $\gamma^{-1}(T)=X_0\cap U$, the open and dense embedding $X_0\hookrightarrow X$ is a simple toroidal embedding in the sense of \cite[Section 3]{Thuillier_toroidal}. 

\begin{definition}
Given $\xi\in F_X$, a \emph{toric chart} $\gamma\mathrel{\mathop:}U\rightarrow Z$ as above is said to be \emph{$\xi$-small}, if $U\subseteq X(\xi)$ and $Z=Z\big(\gamma(\xi)\big)$.
\end{definition}

\begin{lemma}\label{lemma_charmorchart}
Let $\gamma\mathrel{\mathop:}U\rightarrow Z=\Spec k[P]$ be a $\xi$-small chart. Then $\gamma$ induces an isomorphism  $\gamma^F\mathrel{\mathop:}F_U=U\cap F_X\simeq F_Z\xrightarrow{\sim}\Spec P$ such that the diagram
\begin{equation*}\begin{CD}
(U,\overline{\mathcal{O}}_U)@>\phi_U >>F_U\\
@V\gamma VV @V\gamma^F V\simeq V\\ 
(Z,\overline{\mathcal{O}}_Z)@>\phi_Z>> F_Z
\end{CD}\end{equation*}
is commutative. 
\end{lemma}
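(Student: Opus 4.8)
The plan is to reduce everything to a local statement about the chart $\gamma\colon U\to Z=\Spec k[P]$ and to exploit that an étale morphism is strict as a morphism of log schemes, so that it does not change the characteristic monoid at a point. First I would recall that since $\gamma$ is a chart inducing an étale morphism, the pullback of the logarithmic structure $M_Z$ (the toric one coming from $P\to k[P]$) along $\gamma$ is isomorphic to $M_U$, compatibly with the structure maps $\alpha$; this is the content of the chart being a chart in Kato's sense. Consequently, for every $x\in U$ the induced map on characteristic monoids $\overline{M}_{Z,\gamma(x)}\to\overline{M}_{U,x}$ is an isomorphism, and hence the ideal $I(M_U,x)\subseteq\mathcal{O}_{U,x}$ is generated by the image of $I(M_Z,\gamma(x))$ under the flat local homomorphism $\mathcal{O}_{Z,\gamma(x)}\to\mathcal{O}_{U,x}$.

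Next I would use this to identify $F_U$ with a subspace of $F_Z$. By definition $F_U=\{\xi\in U\mid I(M_U,\xi)=\mathfrak{m}_{U,\xi}\}$; because $\gamma$ is étale, $\mathcal{O}_{Z,\gamma(\xi)}\to\mathcal{O}_{U,\xi}$ is flat with $\mathfrak{m}_{Z,\gamma(\xi)}\mathcal{O}_{U,\xi}=\mathfrak{m}_{U,\xi}$, so combining with the previous paragraph, $I(M_U,\xi)=\mathfrak{m}_{U,\xi}$ holds if and only if $I(M_Z,\gamma(\xi))=\mathfrak{m}_{Z,\gamma(\xi)}$, i.e. $\xi\in F_U$ iff $\gamma(\xi)\in F_Z$. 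Thus $\gamma$ restricts to a map $\gamma^F\colon F_U\to F_Z$, and since $F_U=U\cap F_X$ and $F_Z=\Spec P$ by the toric case (Example above), $\gamma^F$ has image inside $\Spec P$. That $\gamma^F$ is an isomorphism onto $\Spec P$ — not merely an open immersion — is exactly where the $\xi$-small hypothesis enters: the condition $Z=Z(\gamma(\xi))$ forces the target cone complex / Kato fan to have $\gamma(\xi)$ as its unique closed point, and since $\xi$ likewise generates the smallest stratum meeting $U$ (because $U\subseteq X(\xi)$), the strict specializations present in $F_U$ are in order-reversing bijection with those in $F_Z$. One then checks $\gamma^F$ is a homeomorphism and that the structure sheaves match, using that $\overline{\mathcal{O}}_{F_U}=M_U/\mathcal{O}_U^\ast$ pulls back from $\overline{\mathcal{O}}_{F_Z}=M_Z/\mathcal{O}_Z^\ast$ via the chart isomorphism of the previous paragraph; this gives an isomorphism of monoidal spaces $F_U\simeq F_Z\xrightarrow{\sim}\Spec P$.

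Finally, commutativity of the square is a pointwise statement about prime ideals: for $x\in U$, $\phi_U(x)$ is the point of $F_U$ cut out by $I(M_U,x)$, while $\gamma^F(\phi_Z(\gamma(x)))$ is the image under $\gamma^F$ of the point cut out by $I(M_Z,\gamma(x))$; these agree precisely because $I(M_U,x)$ is generated by the image of $I(M_Z,\gamma(x))$ under $\mathcal{O}_{Z,\gamma(x)}\to\mathcal{O}_{U,x}$, which is what we established at the start. The compatibility of the sheaf maps $\phi_U^\dagger$ and $\phi_Z^\dagger$ follows from the same chart identification $\gamma^\ast M_Z\simeq M_U$, since $\phi_X^\dagger$ is by construction the quotient map $M_X\to M_X/\mathcal{O}_X^\ast$ viewed through this identification. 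I expect the main obstacle to be the bookkeeping in showing $\gamma^F$ is surjective onto all of $\Spec P$ (rather than an open piece): one must verify that the $\xi$-small condition genuinely pins down the Kato fan of $Z$ to be no larger than that of $U$, which amounts to checking that $\gamma$ induces a bijection on strata through $\xi$ and that no extra cones appear — a consequence of $Z=Z(\gamma(\xi))$ being the affine toric variety attached to the single cone $\sigma$ with $P=S_\sigma^\vee$, together with étaleness preventing the creation or destruction of strata.
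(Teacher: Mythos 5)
Your proposal is correct and follows essentially the same route as the paper: both arguments rest on the chart being strict (so that $\overline{M}_{Z,\gamma(x)}\simeq\overline{M}_{U,x}$ and $I(M_U,x)$ is generated by the image of $I(M_Z,\gamma(x))$), on the conditions $U\subseteq X(\xi)$ and $Z=Z(\gamma(\xi))$ pinning both Kato fans down to $\Spec P$ via the chart isomorphism $P/P^\ast\simeq\overline{M}_{U,\xi}$, and on the factorization $P\to k[P]\to\mathcal{O}_U$ for the commutativity of the square. The paper's own proof is terser --- it identifies $F_U$ with $\Spec P$ directly as the set of generizations of $\xi$, rather than verifying, as you do, that $\gamma$ itself restricts to a map of fans via \'etale descent of the condition $I(M,\xi)=\mathfrak{m}_\xi$ --- but the extra verifications you supply are correct and are precisely the ones the paper leaves implicit.
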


\begin{proof}
Since $U\subseteq X(\xi)$ the fan $F_U$ is naturally homeomorphic to $\Spec P$. Noting that the chart $P\rightarrow M_U$ induces an isomorphism $P/P^\ast\simeq\overline{M}_{U,\xi}$ now yields $F_U\simeq\Spec P=F_Z$. The commutativity of the above diagram follows from the natural factorization $P\rightarrow k[P]\rightarrow \mathcal{O}_U$ of the chart $P\rightarrow\mathcal{O}_U$ inducing $\gamma$.
\end{proof}

\subsection{Toroidal modifications} Given a fine and saturated Kato-fan $F$, recall from \cite[Section 9]{Kato_toricsing} that a \emph{proper subdivision} of $F$ is a fine and saturated Kato fan $F'$ together with a morphism $f\mathrel{\mathop:}F'\rightarrow F$ such that
\begin{enumerate}[(i)]
\item for all $x'\in F'$ the induced morphism $\mathcal{M}_{F,f(x')}^{gp}\rightarrow\mathcal{M}_{F',x'}^{gp}$ is surjective, 
\item the set $f^{-1}(x)$ is finite for all $x\in F$, and
\item the induced map $\Hom(\Spec\N,F')\rightarrow\Hom(\Spec\N,F)$ is bijective. 
\end{enumerate}

Let $X$ be a log-regular variety and denote by $F=F_X$ its associated Kato fan. Given a proper subdivision $F'\rightarrow F$, by \cite[Proposition 9.9, Proposition 9.11, and Proposition 10.3]{Kato_toricsing} there is a log-regular variety $X'$ together with a proper birational logarithmic morphism $X'\rightarrow X$ such that the diagram
\begin{equation*}\begin{CD}
(X',\overline{\mathcal{O}}_{X'})@>\phi_{X'}>> F'\\
@VVV @VVV\\
(X,\overline{\mathcal{O}}_X)@>\phi_X>> F
\end{CD}\end{equation*}
is commutative. The modification $X'\rightarrow X$ is final among those logarithmic morphisms giving rise to a diagram as above and induces an isomorphism $X_0'\simeq X_0$. We are going to refer to the logarithmic morphism $X'\rightarrow X$ as the \emph{toroidal modification} of $X$ induced by the proper subdivision $F'\rightarrow F$. 

\subsection{A glimpse on Artin fans} It is an interesting question whether the well-known notions of \emph{tropical pairs} \cite[Definition 1.1]{Tevelev_tropcomp}, \emph{sch\"on varieties} \cite[Definition 1.3]{Tevelev_tropcomp}, and \emph{initial degenerations} \cite[Section 5]{Gubler_guide} admit generalizations to our framework. These notions arise naturally in the theory of \emph{Artin fans}, a notion that can be implicitly traced back to \cite[Section 5]{Olsson_loggeometryalgstacks} but has first been introduced in \cite[Section 2.1 and 2.2]{AbramovichWise_invlogGromovWitten} and \cite[Section 3]{AbramovichChenMarcusWise_boundedness}. The goal of this section is not to introduce this theory in full generality, but to indicate its potential usefulness in the study of tropical compactifications. The general theory will be treated elsewhere.

Suppose that $X$ is a log-regular variety. The essential idea of \cite{AbramovichWise_invlogGromovWitten} and \cite{AbramovichChenMarcusWise_boundedness} is to lift the characteristic morphism $\phi_X\mathrel{\mathop:}(X,\mathcal{O}_X/\mathcal{O}_X^\ast)\rightarrow F_X$ to a morphism $\widetilde{\phi}_X\mathrel{\mathop:}X\rightarrow\mathcal{A}_X$ that is an initial factorization of the natural morphism $X\rightarrow\LOG_k$ into the stack $\LOG_k$ of logarithmic structures over $\Spec k$ as constructed in \cite{Olsson_loggeometryalgstacks}. If $X$ is a toric variety with big torus $T$, then its Artin fan $\calA_X$ is given by the stack quotient $[X/T]$.

A pair consisting of a log-regular variety $X$ and a closed subset $Y$ of $X_0$ is said to be a \emph{tropical pair} if the closure $\overline{Y}$ of $Y$ in $X$ is proper over $k$ and the natural morphism $\overline{Y}\rightarrow\calA_X$ is faithfully flat. In this case the \emph{initial degenerations} of $Y$ could be defined as the images  in $\calA_X$ of the intersections of $\overline{Y}$ with the toroidal strata. Moreover, the pair $(Y,X)$ is said to be \emph{sch\"on} if the morphism $\overline{Y}\rightarrow\calA_X$ is smooth. This notion generalizes the notion of sch\"on subvarieties of algebraic tori by the following Proposition \ref{prop_schoen=schoen}.

\begin{proposition}\label{prop_schoen=schoen}
Let $Y$ be a subvariety of an algebraic torus and $\overline{Y}$ be a tropical compactifcation in a toric variety $X$. Then the following properties are equivalent:
\begin{enumerate}[(i)]
\item The morphism $T\times \overline{Y}\rightarrow X$ induced by the $T$-action on $X$ is smooth, i.e. the very affine variety $Y$ is sch\"on in the sense of \cite[Definition 1.3]{Tevelev_tropcomp}.
\item The pair $(X,Y)$ is sch\"on in the sense defined above, i.e. the morphism $\overline{Y}\rightarrow \calA_X$ is smooth.
\item With the logarithmic structure induced from $X$ the closure $\overline{Y}$ of $Y$ in $X$ is log-regular. 
\end{enumerate}
\end{proposition}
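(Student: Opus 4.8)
The plan is to establish the two equivalences $(i)\Leftrightarrow(ii)$ and $(i)\Leftrightarrow(iii)$ separately, the first by a base-change argument for algebraic stacks and the second by bootstrapping Kato's criterion \cite[Theorem 8.2]{Kato_toricsing}.

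For $(i)\Leftrightarrow(ii)$, I would use that, since $X$ is toric with torus $T$, its Artin fan is $\calA_X=[X/T]$ and the quotient map $q\colon X\to\calA_X$ is a $T$-torsor, in particular a smooth surjective morphism. As smoothness of a morphism to an algebraic stack may be tested after base change along such a cover, the morphism $\overline{Y}\to\calA_X$ is smooth if and only if $\overline{Y}\times_{\calA_X}X\to X$ is. The standard identification $X\times_{\calA_X}X\simeq X\times T$, under which the two projections become $\pr_X$ and the action map, then yields an isomorphism $\overline{Y}\times_{\calA_X}X\simeq T\times\overline{Y}$ carrying the projection to $X$ to the multiplication map $\mu$; this gives $(i)\Leftrightarrow(ii)$.

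The heart is $(i)\Leftrightarrow(iii)$. First I would record the structural fact that the logarithmic structure $M_X$ of a toric variety is $T$-equivariant (\'etale-locally it is generated by the $T$-semi-invariant characters $\chi^p$, $p\in P$), so that $a^\ast M_X\simeq\pr_X^\ast M_X$ on $T\times X$; restricting to $T\times\overline{Y}$ this shows that $\mu$ is \emph{strict} and that $\mu^\ast M_X$ coincides with the product logarithmic structure $\pr_{\overline{Y}}^\ast M_{\overline{Y}}$, where $M_{\overline{Y}}$ is the logarithmic structure induced from $X$. I would then tie both $(i)$ and $(iii)$ to the single assertion that $T\times\overline{Y}$, with this logarithmic structure, is log-regular. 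Indeed, $(iii)$ is equivalent to it: since $\pr_{\overline{Y}}\colon T\times\overline{Y}\to\overline{Y}$ is a strict, smooth, surjective morphism, one implication is \cite[Theorem 8.2]{Kato_toricsing}, and the reverse is a descent of log-regularity along $\pr_{\overline{Y}}$—at $y\in\overline{Y}$ with a preimage $z$, strictness makes $\calO_{\overline{Y},y}/I(M_{\overline{Y}},y)\to\calO_{T\times\overline{Y},z}/I(\mu^\ast M_X,z)$ a base change of $\pr_{\overline{Y}}$, hence faithfully flat, so regularity of the target forces regularity of the source, and the ranks and dimensions occurring in Kato's defining formula match up. On the other hand, $(i)$ is equivalent to the same assertion, and here is the point where I would use that $\overline{Y}$ is a tropical compactification, namely that $\mu$ is flat: combined with strictness of $\mu$ and log-regularity of $X$, this makes "$\mu$ smooth" equivalent to "$T\times\overline{Y}$ log-regular". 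The implication "$\mu$ smooth $\Rightarrow T\times\overline{Y}$ log-regular" is once more \cite[Theorem 8.2]{Kato_toricsing}; for the converse, since $\mu$ is flat and $k$ is perfect it suffices to check that the fibers of $\mu$ are regular, and at $z\in T\times\overline{Y}$ with $x=\mu(z)$ strictness gives $I(\mu^\ast M_X,z)=I(M_X,x)\calO_{T\times\overline{Y},z}\subseteq\mathfrak{m}_{X,x}\calO_{T\times\overline{Y},z}$, so that $\calO_{X,x}/I(M_X,x)\to\calO_{T\times\overline{Y},z}/I(\mu^\ast M_X,z)$ is a flat local homomorphism of regular local rings whose regular closed fiber is exactly the local ring of $\mu^{-1}(x)$ at $z$.

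The main obstacle I expect is bookkeeping rather than real difficulty: one has to verify carefully that $\mu$ is strict—so that logarithmic ideals are preserved under the base changes above—and then invoke the two commutative-algebra inputs, faithfully flat descent of regularity and the good behaviour of regular local rings under flat local homomorphisms, to turn log-regularity into smoothness and back. A minor point is to make sure the hypothesis "tropical compactification" is used only through the flatness of $\mu$, with the properness of $\overline{Y}$ playing no role here, so that the statement is consistent with Tevelev's \cite[Definition 1.1]{Tevelev_tropcomp}.
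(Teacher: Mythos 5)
Your equivalence $(i)\Leftrightarrow(ii)$ is correct and is essentially the paper's argument in disguise: the identification $\overline{Y}\times_{\calA_X}X\simeq T\times\overline{Y}$, with second projection equal to $\mu$, is exactly the groupoid-presentation computation in the paper, and testing smoothness after base change along the smooth cover $X\rightarrow[X/T]$ is legitimate. Where you genuinely diverge is in replacing the paper's link $(ii)\Leftrightarrow(iii)$ --- which goes through Olsson's stack $\LOG_k$ and the \'etaleness of $\calA_X\rightarrow\LOG_k$ --- by a direct argument $(i)\Leftrightarrow(iii)$ via log-regularity of $T\times\overline{Y}$. That is an attractive, more elementary route (it would avoid \cite[Proposition 2.1.1]{AbramovichWise_invlogGromovWitten} entirely, at the cost of using flatness of $\mu$, which the paper's proof never needs), and the sub-steps ``$\overline{Y}$ log-regular $\Leftrightarrow$ $T\times\overline{Y}$ log-regular'' via \cite[Theorem 8.2]{Kato_toricsing} and faithfully flat descent of regularity are fine.

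The gap is in the final implication ``$T\times\overline{Y}$ log-regular and $\mu$ flat $\Rightarrow$ $\mu$ smooth.'' You deduce regularity of the fiber $\mu^{-1}(x)$ at $z$ from the assertion that the closed fiber of the flat local homomorphism of regular local rings $\calO_{X,x}/I(M_X,x)\rightarrow\calO_{T\times\overline{Y},z}/I(\mu^\ast M_X,z)$ is regular, citing ``good behaviour of regular local rings under flat local homomorphisms.'' That general principle is false: $k[[t]]\rightarrow k[[x]]$, $t\mapsto x^2$, is a flat local homomorphism of regular local rings (in any characteristic) whose closed fiber $k[x]/(x^2)$ is not regular; geometrically, a flat morphism between regular varieties need not be smooth. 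In fact the regularity of the fibers of $\mu$ is essentially the content of sch\"onness, so as written the step begs the question. The claim is nevertheless true in your situation, but for a geometric reason you would need to supply: for $x$ in the orbit $O$ one has $\mu^{-1}(O)\simeq T\times(\overline{Y}\cap O)$ and $\mu^{-1}(x)$ is the pullback of $\overline{Y}\cap O$ along the smooth surjection $T\rightarrow O$, $t\mapsto t^{-1}x$; since the logarithmic strata of a log-regular scheme are regular (their local rings are the quotients $\calO_{\cdot,z}/I(\cdot,z)$ appearing in Kato's definition), log-regularity of $T\times\overline{Y}$ gives regularity of $\overline{Y}\cap O$ and hence of the fibers of $\mu$. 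With that substitution your argument closes; without it, the key implication $(iii)\Rightarrow(i)$ is not proved.
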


\begin{proof}
We note that $\overline{Y}$ being log-regular is equivalent to being log-smooth over $\Spec k$ by \cite[Proposition 8.3]{Kato_toricsing}. This is, in turn, equivalent to the natural morphism $Y\rightarrow\LOG_k$ constructed in \cite{Olsson_loggeometryalgstacks} being smooth, which is equivalent to $Y\rightarrow\calA_X$ being smooth, since $\calA_X\rightarrow\LOG_k$ is \'etale by \cite[Proposition 2.1.1]{AbramovichWise_invlogGromovWitten} and the logarithmic structure on $\overline{Y}$ is the one induced by $X$. This proves the equivalence of part (ii) and part (iii).

We have already seen in the introduction that the smoothness of the morphism $T\times \overline{Y}\rightarrow X$ implies that $\overline{Y}$ is log-regular, i.e. that part (i) implies part (iii). Conversely, if $\overline{Y}\rightarrow\calA_X$ is smooth, then also the natural morphism $(\overline{Y}\rightrightarrows \overline{Y})\rightarrow (T\times X\rightrightarrows X)$ from the trivial groupoid representation of $\overline{Y}$ into the natural groupoid presentation of $\calA_X=[X/T]$ is smooth. The target arrow in $(T\times X\rightrightarrows X)$ is, however, nothing but the operation of $T$ on $X$ and thus the morphism $T\times \overline{Y}\rightarrow X$ is smooth. Therefore part (ii) implies part (i) and this finishes the proof.
\end{proof}


\section{Tropicalization}

\subsection{Cone complexes} Fine and saturated Kato fans were used by K. Kato \cite{Kato_toricsing} as a replacement of the notion of a \emph{rational polyhedral cone complex} (or short: \emph{cone complex}) as defined in \cite{KKMSD_toroidal}. For a fine and saturated Kato fan $F$ the underlying set of the associated \emph{cone complex} $\Sigma_F$ is equal to the set of morphisms $\Spec\mathbb{R}_{\geq 0}\rightarrow F$. 
 
In order to recover its structure as a cone complex consider the \emph{reduction map} $r\mathrel{\mathop:}\Sigma_F\rightarrow F$ that is given by sending a morphism $u\mathrel{\mathop:}\Spec \mathbb{R}_{\geq 0}\rightarrow F$ to the point $u(\mathbb{R}_{>0})\in F$. 

\begin{proposition}\label{prop_conecomplexreduction}
Suppose that $F$ is a fine and saturated Kato fan. Then the inverse image $r^{-1}(U)$ of an open affine subset $U=\Spec P$ in $\Sigma_F$ is a strictly convex rational polyhedral cone $\sigma_U=\Hom(P,\mathbb{R}_{\geq 0})$ and its relative interior $\mathring{\sigma}_U$ is given by $r^{-1}(x)$ for the unique closed point $x$ in $U$. 
\begin{enumerate}[(i)]
\item If $V\subseteq U$ for open affine subsets $U,V\subseteq F$, then $\sigma_V$ is a face of $\sigma_U$. 
\item The intersection $\sigma_U\cap\sigma_V$ is a union of finitely many common faces. 
\end{enumerate}
\end{proposition}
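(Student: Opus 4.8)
The plan is to reduce everything to the combinatorics of the Kato fan $F$, using the duality $\sigma_U = \Hom(P,\R_{\geq 0})$ between an affine open $U = \Spec P$ and the cone of monoid homomorphisms into $\R_{\geq 0}$, and the fact that prime ideals of $P$ (equivalently, faces of $P^{gp}$ generated by faces of $P$, equivalently, open subsets of $\Spec P$) correspond to faces of the dual cone $\sigma_U$. First I would establish the unnumbered claim: that $r^{-1}(U) = \sigma_U$ as sets, and that $\sigma_U$ is a strictly convex rational polyhedral cone. A morphism $u\mathrel{\mathop:}\Spec\R_{\geq 0}\rightarrow F$ with $r(u) = u(\R_{>0}) \in U$ factors through the affine open $U$ (since $U$ is open and $u$ is continuous, $u^{-1}(U)$ contains the closed point and hence is all of $\Spec\R_{\geq 0}$), so it is the same datum as a morphism of monoidal spaces $\Spec\R_{\geq 0}\rightarrow\Spec P$, which by the standard adjunction for Kato-fan spectra is the same as a monoid homomorphism $P\rightarrow\R_{\geq 0}$. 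That $P$ is fine and saturated means $\sigma_U = \Hom(P,\R_{\geq 0})$ is rational polyhedral; strict convexity follows because $P^{gp}$ is generated by $P$ (sharpness of $\overline{M}$ is not needed, only that $F$ is fine and saturated), so $\sigma_U$ spans no line. For the statement about the relative interior: a homomorphism $\varphi\mathrel{\mathop:}P\rightarrow\R_{\geq 0}$ lies in $\mathring\sigma_U$ iff $\varphi(p) > 0$ for every non-unit $p\in P$, which is precisely the condition that $\ker(\varphi)$ — i.e. $\varphi^{-1}(0)$, the prime ideal defining the point $u(\R_{>0})$ — equals the maximal ideal $P - P^\ast$, i.e. that $r(u)$ is the unique closed point $x$ of $U = \Spec P$; for a general $\varphi$, the prime $\varphi^{-1}(0)$ is an arbitrary prime of $P$, and this recovers the bijection between faces of $\sigma_U$ and points of $U$, with relative interiors of faces $\leftrightarrow$ points.

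For part (i), suppose $V = \Spec Q \subseteq U = \Spec P$ are affine opens. Then $V$ is a principal open $D(f) \subseteq \Spec P$ for some $f\in P$ (after possibly shrinking, but an open subset that is itself affine and contained in $U$ is of this form up to the identification coming from the Kato fan structure; more precisely $V$ corresponds to the open set of primes not containing some prime $\mathfrak{p}$, hence to localization $Q = P_{\mathfrak p}$ in the monoid sense), so $\sigma_V = \Hom(Q,\R_{\geq 0})$ is exactly the set of $\varphi\in\Hom(P,\R_{\geq 0})$ that extend to $P_{\mathfrak p}$, which is the set of $\varphi$ vanishing on $\mathfrak p$ — a face of $\sigma_U$. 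Conversely every face of $\sigma_U$ arises this way, from the prime ideal of elements on which the face vanishes. I would state this face/prime dictionary once and then invoke it.

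For part (ii), given two affine opens $U = \Spec P$ and $V = \Spec Q$ inside $F$, both $\sigma_U$ and $\sigma_V$ sit inside $\Sigma_F$ and their intersection is $r^{-1}(U \cap V)$. Now $U\cap V$ is an open subset of both $U$ and $V$; since $\Spec P$ is quasicompact, $U\cap V$ is a finite union of principal opens $D(f_i) \subseteq \Spec P$, hence $r^{-1}(U\cap V) = \bigcup_i r^{-1}(D(f_i)) = \bigcup_i \sigma_{D(f_i)}$, and each $\sigma_{D(f_i)}$ is a face of $\sigma_U$ by part (i). By the symmetric argument each $\sigma_{D(f_i)}$ is also a face of $\sigma_V$ (it equals $r^{-1}$ of an open subset of $\Spec Q$ contained in $V$), so it is a common face. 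This exhibits $\sigma_U\cap\sigma_V$ as a finite union of common faces, as required. The main obstacle — and the only place genuine care is needed — is the unnumbered claim, specifically verifying cleanly that a morphism $\Spec\R_{\geq 0}\rightarrow F$ landing (under $r$) in an affine open $U$ factors through $U$ as a morphism of monoidal spaces and thus corresponds to a monoid homomorphism out of $P$; once this adjunction $\Hom(\Spec\R_{\geq 0},\Spec P) \simeq \Hom_{\mathbf{Mon}}(P,\R_{\geq 0})$ is in hand, parts (i) and (ii) are essentially formal consequences of the quasicompactness of $\Spec P$ and the prime-ideal/face dictionary.
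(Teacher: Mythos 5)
Your proposal follows essentially the same route as the paper: identify $r^{-1}(U)$ with $\Hom(P,\R_{\geq 0})$ via the factorization through the affine open, identify $\sigma_V$ for an affine open $V\subseteq U$ with $\Hom(P_{\mathfrak{p}},\R_{\geq 0})$ for the prime $\mathfrak{p}$ of the closed point of $V$ and recognize it as a face, and deduce (ii) by writing $U\cap V$ as a finite union of affine opens, each a common face by (i). One recurring slip is worth fixing: you twice interchange a prime ideal of $P$ with its complementary face. The prime ideal defining $r(u)$ is $\varphi^{-1}(\R_{>0})$, not $\varphi^{-1}(0)$ (the latter is a face, i.e.\ a submonoid), and $\sigma_V=\Hom(P_{\mathfrak{p}},\R_{\geq 0})$ is the set of $\varphi$ vanishing on the face $P\setminus\mathfrak{p}$ (the elements inverted in $P_{\mathfrak{p}}$), not on $\mathfrak{p}$ itself --- as literally stated, ``the set of $\varphi$ vanishing on $\mathfrak{p}$'' is typically just $\{0\}$ (take $P=\N^2$ and $\mathfrak{p}=\{(a,b):a\geq 1\}$). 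Since the correct characterizations (``extends to $P_{\mathfrak{p}}$'', ``$\varphi(p)>0$ for all non-units'') also appear in your text and are what actually drive the argument, the proof is sound once the prime/face dictionary is stated the right way round.
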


\begin{proof} 
Let $U=\Spec P$ be  open affine subset of $F$. Then 
\begin{equation*}
r^{-1}(U)=\Hom(\Spec \mathbb{R}_{\geq 0},U)=\Hom(P,\mathbb{R}_{\geq 0})=\Hom(P/P^\ast,\mathbb{R}_{\geq 0})
\end{equation*}
and this is a strictly  convex rational polyhedral cone $\sigma_U$. Its relative interior $\mathring{\sigma}_U$ is equal to 
\begin{equation*}
\big\{u\in\Hom(P,\mathbb{R}_{\geq 0})\big\vert u(p)>0 \ \ \forall p\in P-P^\ast\big\}=r^{-1}(x) 
\end{equation*}
for the unique closed point $x$ of $U=\Spec P$. 

For (i) let $y\in U=\Spec P$ be the unique closed point of $V$ and denote by $\mathfrak{p}_y$ the corresponding prime ideal. Then $\mathcal{M}_{F,y}=P_{\mathfrak{p}_y}/P_{\mathfrak{p}_y}^\ast$ and $r^{-1}(V)=\Hom(P_{\mathfrak{p}_y}/P_{\mathfrak{p}_y}^\ast,\mathbb{R}_{\geq 0})=\Hom(P_{\mathfrak{p}_y},\mathbb{R}_{\geq 0})$ is a face of $\Hom(P,\mathbb{R}_{\geq 0})$.

For (ii) it is enough to note that the intersection of two open affine subsets $U$ and $V$ is the union of finitely many open affines, since $F$ only contains finitely many open affine subsets and those generate the topology of $F$.
\end{proof}

The cone complex $\Sigma_F$ is canonically endowed with the weak topology: A subset $A$ in $\Sigma_F$ is closed if and only if  the intersections $A\cap\sigma_U$ are closed for all cones $\sigma_U$ in $\Sigma_F$, or alternatively for all open affine subsets $U$ of $F$. 

\begin{definition}
The \emph{extended cone complex} $\overline{\Sigma}_F$ associated to $F$ is defined to be the set of morphisms $\Spec \Rbar_{\geq 0}\rightarrow F$.
\end{definition}

The extended cone complex $\overline{\Sigma}_F$ is a canonical compactification of the open and dense subspace $\Sigma_F$ of $\overline{\Sigma}_F$. The reduction map $r$ above extends to a map $r\mathrel{\mathop:}\overline{\Sigma}_F\rightarrow F$ and the statements of Proposition \ref{prop_conecomplexreduction} go through without any problems. The weak topology on $\overline{\Sigma}_F$ is defined analogously using the topology of pointwise convergence on $\Hom(P,\overline{\mathbb{R}}_{\geq 0})$ as a local model. 

In our framework it is desirable to think about the extended cone complex $\overline{\Sigma}_F$ of $F$ as an analogue of the Berkovich analytic space $X^\beth$ associated to a variety $X$. In particular there is also a continuous structure map $\rho\mathrel{\mathop:}\overline{\Sigma}_F\rightarrow F$ given by sending a morphism $u\mathrel{\mathop:}\Spec\Rbar_{\geq 0}\rightarrow F$ to the point $u\big(\{\infty\}\big)\in F$. We refer the reader to \cite[Section 3]{Ulirsch_functroplogsch} for further details on this construction. 

\begin{example}\label{example_conecomplex=fan}
If $F=F_X$ is the Kato-fan associated to a toric variety $X=X(\Delta)$ defined by a fan $\Delta\subseteq N_\mathbb{R}$, then there is a natural identification $\Sigma_X=\Delta$. In order to see this we can assume that $X=\Spec k[P]$ is an affine toric variety, defined by a cone $\sigma$, and in this case we have 
\begin{equation*}
\Sigma_X=\Hom(P,\mathbb{R}_{\geq 0})=\sigma \ .
\end{equation*}
\end{example}

\subsection{Tropicalization} Let $X$ be a log-regular variety and denote by $\phi_X\mathrel{\mathop:}(X,\overline{\mathcal{O}}_X)\rightarrow F_X$ its characteristic morphism. Write $\Sigma_X$ and $\overline{\Sigma}_X$ for the cone complex and the extended cone complex associated to $F_X$ respectively. 

In \cite[Section 6.1]{Ulirsch_functroplogsch} we define the \emph{tropicalization map} $\trop_X\rightarrow\overline{\Sigma}_X$ as follows: A point $x\in X^\beth$ can be represented by a morphism $\underline{x}\mathrel{\mathop:}\Spec R\rightarrow (X,\overline{\mathcal{O}}_X)$ for a valuation ring $R$ extending $k$, since $\vert a\vert =1$ for all $a\in R^\ast$. Then its image $\trop_X(x)$ in $\overline{\Sigma}_X=\Hom(\Spec\overline{\mathbb{R}}_{\geq 0},F_X)$ is given by the composition 
\begin{equation*}\begin{CD}
\Spec \overline{\mathbb{R}}_{\geq 0}@>\val^\#>>\Spec R @>\underline{x}>> (X,\overline{\mathcal{O}}_X)@>\phi_X>>F_X \ , 
\end{CD}\end{equation*}
where $\val^\#$ is the morphism $\Spec \overline{\mathbb{R}}_{\geq 0}\rightarrow\Spec R$ induced by the valuation $\val\mathrel{\mathop:}R\rightarrow\overline{\mathbb{R}}_{\geq 0}$ on $R$. 

Recall the following Proposition \ref{prop_tropwelldef} from \cite{Ulirsch_functroplogsch}.

\begin{proposition}[\cite{Ulirsch_functroplogsch} Proposition 6.2 (i)]\label{prop_tropwelldef}
The tropicalization map $\trop_X\mathrel{\mathop:}X^\beth\rightarrow\overline{\Sigma}_X$ is well-defined and continuous. Moreover, the diagrams
\begin{equation*}\begin{CD}
X^\beth @>\trop_X>>\overline{\Sigma}_X\\
@VrVV @VrVV\\
(X,\overline{\mathcal{O}}_X) @>\phi_X>> F_X
\end{CD}
\qquad
\begin{CD}
X^\beth @>\trop_X>>\overline{\Sigma}_X\\
@V\rho VV @V\rho VV\\
(X,\overline{\mathcal{O}}_X)@>\phi_X >>F_X
\end{CD}\end{equation*}
commute. 
\end{proposition}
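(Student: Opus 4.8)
The plan is to unwind the definition of $\trop_X$ into an explicit intrinsic formula, from which well-definedness is immediate, and then to deduce continuity and the two commutativities by pasting together the toric case along charts using Lemma~\ref{lemma_charmorchart}.

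For well-definedness I would fix $x\in X^\beth$ and a representative $\underline x\mathrel{\mathop:}\Spec R\to X$, so that the closed point of $\Spec R$ maps to $r(x)$, the generic point to $\rho(x)$, and the induced local homomorphism $\mathcal O_{X,r(x)}\to R$ satisfies $\val(a)=-\log\vert a\vert_x$. Tracing the composite $\phi_X\circ\underline x\circ\val^\#$ on structure sheaves, one finds that it lands at the point $\xi\mathrel{\mathop:}=\phi_X(r(x))\in F_X$ and induces there the monoid homomorphism
\begin{equation*}
\overline M_{X,\xi}\longrightarrow\overline{\mathbb R}_{\geq 0},\qquad \overline m\longmapsto\val\big(\alpha_X(m)\big)=-\log\big\vert\alpha_X(m)\big\vert_x,
\end{equation*}
where $m$ is any local section of $M_X$ lifting $\overline m$. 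Since a morphism $\Spec\overline{\mathbb R}_{\geq 0}\to F_X$ is determined by its image point together with the induced homomorphism on stalks, it remains to note that this formula manifestly does not depend on $R$ or on $\underline x$, and that it is well-defined on $\overline M_{X,\xi}=M_{X,\xi}/M_{X,\xi}^\ast$: two lifts of $\overline m$ differ by a unit $u\in M_{X,\xi}^\ast$, and $\vert\alpha_X(u)\vert_x=1$ because $x$ is a bounded seminorm (so $\vert\alpha_X(u)\vert_x\leq 1$ and $\vert\alpha_X(u)^{-1}\vert_x\leq 1$). In particular, in the toric case $Z=\Spec k[P]$ one recovers through Example~\ref{example_conecomplex=fan} the classical map $\trop_Z\mathrel{\mathop:}Z^\beth\to\Hom(P,\overline{\mathbb R}_{\geq 0})$, $z\mapsto\big(p\mapsto-\log\vert\chi^p\vert_z\big)$.

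For continuity I would use Kato's criterion for log-smoothness to cover $X$ by finitely many $\xi_i$-small toric charts $\gamma_i\mathrel{\mathop:}U_i\to Z_i=\Spec k[P_i]$; this is possible after shrinking each toric chart around a point into the corresponding open $X(\xi_i)$ and replacing its target by the appropriate affine open, and using quasicompactness of $X$. Because the reduction map $r\mathrel{\mathop:}X^\beth\to X$ is anti-continuous, each $U_i^\beth=r^{-1}(U_i)$ is \emph{closed} in $X^\beth$, and finitely many of them cover $X^\beth$; by the pasting lemma it therefore suffices to check that $\trop_X$ is continuous on each $U_i^\beth$. On $U_i^\beth$ any representative $\underline x$ of a point factors through $U_i$ — its closed point maps to $r(x)\in U_i$ and its generic point to $\rho(x)$, a generization of $r(x)$, hence also in $U_i$ — so there $\trop_X$ agrees with $\trop_{U_i}$, which by Lemma~\ref{lemma_charmorchart} is identified with $\trop_{Z_i}\circ\gamma_i^\beth$ up to the homeomorphism $\overline\Sigma_{U_i}\xrightarrow{\sim}\overline\Sigma_{Z_i}$ induced by $\gamma_i^F$. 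Now $\gamma_i^\beth\mathrel{\mathop:}U_i^\beth\to Z_i^\beth$ is continuous by functoriality of the $(\cdot)^\beth$-construction \cite{Thuillier_toroidal}, and $\trop_{Z_i}$ is continuous because each evaluation $z\mapsto-\log\vert\chi^p\vert_z$ is continuous on $Z_i^\beth\subseteq Z_i^{an}$ by definition of the Berkovich topology \cite{Berkovich_book}, while $\Hom(P_i,\overline{\mathbb R}_{\geq 0})$ carries the topology of pointwise convergence. Hence $\trop_X\vert_{U_i^\beth}$, and therefore $\trop_X$, is continuous.

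The two squares I would read off from the same point-chase. The unique closed point of $\Spec\overline{\mathbb R}_{\geq 0}$ is the ideal $(0,\infty]$, and it has a second non-generic point $\{\infty\}$; the morphism $\val^\#$ carries $(0,\infty]$ to $\mathfrak m_R$ and $\{\infty\}$ to the zero ideal $(0)$, and composing with $\underline x$ these map to $r(x)$ and to $\rho(x)$ respectively. Applying $\phi_X$ then yields $r\big(\trop_X(x)\big)=\phi_X\big(r(x)\big)$ and $\rho\big(\trop_X(x)\big)=\phi_X\big(\rho(x)\big)$, which is precisely the content of the two diagrams. The step I expect to be the main obstacle is the reduction used in the continuity argument: in contrast to ordinary analytification, the subspaces $U^\beth$ attached to open subschemes are closed rather than open in $X^\beth$, so one genuinely needs to work with a finite cover and the pasting lemma, and one must verify that representatives of points of $U_i^\beth$ really do factor through $U_i$ before invoking Lemma~\ref{lemma_charmorchart}.
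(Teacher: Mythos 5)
Note first that this paper does not actually prove Proposition~\ref{prop_tropwelldef}: it is imported verbatim from \cite{Ulirsch_functroplogsch}, Proposition 6.2~(i), so there is no in-document proof to compare against. Judged on its own terms, your argument is correct and is essentially the one the cited reference runs: unwind $\trop_X(x)=\phi_X\circ\underline{x}\circ\val^\#$ into the intrinsic stalk formula $\overline m\mapsto -\log\vert\alpha_X(m)\vert_x$ at $\xi=\phi_X(r(x))$ (which kills the dependence on the representative $\underline x$ and on $R$, with units handled by boundedness of the seminorm), check continuity on the closed pieces $U_i^\beth=r^{-1}(U_i)$ of a finite toric-chart cover via the pasting lemma and the pointwise-convergence topology on $\Hom(P_i,\overline{\mathbb R}_{\geq 0})$, and read off both squares from the images of the three points of $\Spec\overline{\mathbb R}_{\geq 0}$. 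Two small points you leave implicit but which do go through: the induced map $\overline M_{X,\xi}\to\overline{\mathbb R}_{\geq 0}$ is automatically a \emph{local} monoid homomorphism (so it really is a point of $\overline\Sigma_X$), since it is a composite of morphisms of locally monoidal spaces; and $\overline\Sigma_{U_i}$ sits inside $\overline\Sigma_X$ with the subspace topology (it is a finite closed subcomplex), so continuity into $\overline\Sigma_{U_i}$ does yield continuity into $\overline\Sigma_X$. Your closing caveat is well placed: the anti-continuity of $r$, forcing a closed rather than open cover and hence the pasting lemma, is the one genuinely non-routine step, and your verification that representatives of points of $U_i^\beth$ factor through $U_i$ (openness of $U_i$ plus the fact that every point of $\Spec R$ generizes the closed point) is exactly what is needed before invoking Lemma~\ref{lemma_charmorchart}.
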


\begin{corollary}[Strata-cone correspondence]\label{cor_stratacone}
There is an order-reversing one-to-one correspondence between the cones in $\Sigma_X$ and the strata of $X$. Explicitly it is given by 
\begin{equation*}
\mathring{\sigma}\longmapsto r\big(\trop_X^{-1}(\mathring{\sigma})\big)
\end{equation*}
for a an open cone $\mathring{\sigma}\subseteq\Sigma_X$ and
\begin{equation*}
E\longmapsto \trop_X\big(r^{-1}(E)\cap X_0^{an}\big)
\end{equation*}
for a stratum $E$ of $X$. 
\end{corollary}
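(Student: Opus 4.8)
The plan is to factor the claimed correspondence through the reduction map $r$ and the characteristic morphism $\phi_X$, using the two commutative squares of Proposition \ref{prop_tropwelldef} together with the description of $\Sigma_X$ in Proposition \ref{prop_conecomplexreduction}.

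First I would record two preliminary facts. \emph{(a) For every $\xi\in F_X$ one has $\phi_X^{-1}(\xi)=E(\xi)$; in particular $\phi_X^{-1}(\xi_0)=X_0$, where $\xi_0$ is the generic point of $F_X$.} Since $I(M_X,\xi)=\mathfrak{m}_{X,\xi}$ for $\xi\in F_X$, the morphism $\phi_X$ is a retraction onto $F_X$ and $\phi_X(x)$ is always a generization of $x$, so $\phi_X^{-1}(\xi)\subseteq\overline{\{\xi\}}$; the precise identification is \'etale-local on $X$, so by Lemma \ref{lemma_charmorchart} it reduces to the toric case $X=\Spec k[P]$, where, as recorded above, the fibres of $\phi_X$ are exactly the torus orbits, which are exactly the strata. \emph{(b) The assignment $\xi\mapsto r^{-1}(\xi)\subseteq\Sigma_X$ is a bijection between $F_X$ and the set of relative interiors of cones of $\Sigma_X$, with inverse $\mathring{\sigma}\mapsto r(\mathring{\sigma})$.} This is Proposition \ref{prop_conecomplexreduction}: $r^{-1}(\xi)$ is the relative interior of $\sigma_{U(\xi)}$ for the minimal affine open $U(\xi)\subseteq F_X$ containing $\xi$ (which then has $\xi$ as its closed point), and $\sigma_V$ is a face of $\sigma_U$ exactly when $V\subseteq U$, i.e. exactly when the closed point of $V$ generizes that of $U$. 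As an open subset of $F_X$ is closed under generization and $\overline{\{\xi\}}=\overline{E(\xi)}$ has generic point $\xi$, the last relation translates into $E(\xi_U)\subseteq\overline{E(\xi_V)}$; hence the composite bijection $\sigma\mapsto E\big(r(\mathring{\sigma})\big)$ from cones of $\Sigma_X$ to strata of $X$ is order-reversing, which is the structural part of the statement.

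It then remains to check that the two displayed formulas both compute this bijection, which is now formal. Fix a cone with relative interior $\mathring{\sigma}$ and put $\xi=r(\mathring{\sigma})$. The first square of Proposition \ref{prop_tropwelldef} gives $\phi_X\big(r(x)\big)=r\big(\trop_X(x)\big)=\xi$ for $x\in\trop_X^{-1}(\mathring{\sigma})$, so $r(x)\in\phi_X^{-1}(\xi)=E(\xi)$ and $r\big(\trop_X^{-1}(\mathring{\sigma})\big)\subseteq E(\xi)$. For the reverse inclusion, a point $e\in E(\xi)$ is realized as $r(x)$ for a point $x\in X^\beth$ coming from a rank-one valuation ring of $k(X)$ dominating $\mathcal{O}_{X,e}$; such an $x$ has $\rho(x)=\eta_X$, hence $r\big(\trop_X(x)\big)=\phi_X(e)=\xi$ and $\rho\big(\trop_X(x)\big)=\phi_X(\eta_X)=\xi_0$, so $\trop_X(x)\in\Sigma_X$ --- a point of $\overline{\Sigma}_X$ lies in $\Sigma_X$ precisely when its image under $\rho$ is $\xi_0$ --- and therefore $\trop_X(x)\in\mathring{\sigma}$. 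Thus $r\big(\trop_X^{-1}(\mathring{\sigma})\big)=E(\xi)$. Dually, for a stratum $E(\xi)$: if $x\in r^{-1}(E(\xi))\cap X_0^{an}$ then the two squares give $r(\trop_X(x))=\xi$ and $\rho(\trop_X(x))=\xi_0$, so $\trop_X(x)\in r^{-1}(\xi)\cap\Sigma_X=\mathring{\sigma}$; conversely, given $u\in\mathring{\sigma}$, surjectivity of $\trop_X$ (from \cite{Ulirsch_functroplogsch}) provides $x\in X^\beth$ with $\trop_X(x)=u$, and then $r(x)\in\phi_X^{-1}(\xi)=E(\xi)$ while $\rho(x)\in\phi_X^{-1}(\xi_0)=X_0$, so $x\in r^{-1}(E(\xi))\cap X_0^{an}$. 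Hence $\trop_X\big(r^{-1}(E(\xi))\cap X_0^{an}\big)=\mathring{\sigma}$. The two assignments are visibly inverse to one another, so they both give the order-reversing bijection of (b), as claimed.

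The step I expect to be the main obstacle is preliminary fact (a): identifying the fibres of the characteristic morphism with the toroidal strata. It is the only point where one must pass from the toric charts to $X$ itself, using that the stratification of a simple toroidal embedding is \'etale-local in the sense of \cite{Thuillier_toroidal}. The remaining external inputs are standard: the existence of a rank-one valuation ring of $k(X)$ dominating a given local ring $\mathcal{O}_{X,e}$, and the surjectivity of $\trop_X\colon X^\beth\to\overline{\Sigma}_X$ established in \cite{Ulirsch_functroplogsch}.
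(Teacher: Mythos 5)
Your proof is correct and follows essentially the same route as the paper, which derives the corollary from the commutativity of the square $r\circ\trop_X=\phi_X\circ r$ in Proposition \ref{prop_tropwelldef}, the description of $r^{-1}$ of affine opens and closed points in Proposition \ref{prop_conecomplexreduction}, and the fact that $\phi_X$ sends each stratum $E(\xi)$ to its generic point $\xi$. The paper states this as an immediate consequence; you simply make explicit the details it leaves implicit (the identification $\phi_X^{-1}(\xi)=E(\xi)$ via toric charts, the use of the second square with $\rho$ to detect $\Sigma_X$ inside $\overline{\Sigma}_X$, and the surjectivity inputs needed for the reverse inclusions).
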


\begin{proof}
This is an immediate consequence of the commutativity of 
\begin{equation*}\begin{CD}
X^\beth @>\trop_X>>\overline{\Sigma}_X\\
@Vr VV @Vr VV\\
X@>\phi_X >>F_X 
\end{CD}\end{equation*} 
from Proposition \ref{prop_tropwelldef}, Proposition \ref{prop_conecomplexreduction} and the fact that $\phi_X$ sends every point in a stratum $E=E(\xi)$ to its generic point $\xi$. 
\end{proof}

\begin{corollary}
The tropicalization map induces a continuous map $X^\beth\cap X_0^{an}\rightarrow\Sigma_F$. 
\end{corollary}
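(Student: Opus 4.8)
The plan is to obtain the asserted map simply as the restriction of the tropicalization map $\trop_X\colon X^\beth\to\overline{\Sigma}_X$ from Proposition \ref{prop_tropwelldef}. What has to be checked is that $\trop_X$ carries $X^\beth\cap X_0^{an}$ into the open subspace $\Sigma_X=\Sigma_{F_X}\subseteq\overline{\Sigma}_X$; continuity of the resulting map is then immediate, since $\trop_X$ is continuous and $\Sigma_X$ carries the subspace topology inherited from its compactification $\overline{\Sigma}_X$.

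I would first record two identifications. On the analytic side, since $X_0\hookrightarrow X$ is an open immersion the structure map $\rho\colon X^{an}\to X$ identifies $X_0^{an}$ with $\rho^{-1}(X_0)$; in particular $\rho(x)$ lies in the open stratum $X_0$ for every $x\in X^\beth\cap X_0^{an}$. On the tropical side, let $\eta\in F_X$ be the generic point, so that $X_0=E(\eta)$. I claim that, inside $\overline{\Sigma}_X$, one has $\Sigma_X=\rho^{-1}(\eta)$, where now $\rho\colon\overline{\Sigma}_X\to F_X$ denotes the structure map sending $u\colon\Spec\Rbar_{\geq 0}\to F_X$ to $u(\{\infty\})$. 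This is a local computation on an affine open $\Spec P$ of $F_X$: a point of $\overline{\Sigma}_X$ lying over $\Spec P$ is a monoid homomorphism $h\colon P\to\Rbar_{\geq 0}$, the point $u(\{\infty\})$ of the corresponding morphism $u$ is the point of $\Spec P$ associated to the prime ideal $h^{-1}(\{\infty\})$ of $P$, and this prime ideal is the empty one — equivalently $h$ takes values in $\R_{\geq 0}$, i.e.\ the point lies in $\Sigma_X$ — precisely when $u(\{\infty\})$ is the generic point of $\Spec P$, which coincides with $\eta$.

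With these in hand the statement follows from the commutativity of the right-hand (structure map) square in Proposition \ref{prop_tropwelldef}: for $x\in X^\beth\cap X_0^{an}$ we compute
\[
\rho\big(\trop_X(x)\big)=\phi_X\big(\rho(x)\big)=\eta ,
\]
using that $\rho(x)\in X_0$ together with the fact that the characteristic morphism $\phi_X$ sends every point of the stratum $E(\eta)=X_0$ to $\eta$. Hence $\trop_X(x)\in\rho^{-1}(\eta)=\Sigma_X$, as desired. The only slightly delicate point in this argument is the identification $\Sigma_X=\rho^{-1}(\eta)$ — that is, the fact that not attaining the value $\infty$ is detected by the reduction at $\{\infty\}$; this is a routine unwinding of the definitions of $\Spec\R_{\geq 0}$, $\Spec\Rbar_{\geq 0}$ and of the map $\rho$, and may alternatively be extracted from \cite{Ulirsch_functroplogsch}.
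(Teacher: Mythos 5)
Your argument is correct and is essentially the paper's own proof: both rest on the commutativity of the structure-map square $\rho\circ\trop_X=\phi_X\circ\rho$ from Proposition \ref{prop_tropwelldef} together with the identifications $X^\beth\cap X_0^{an}=\rho^{-1}(X_0)\cap X^\beth$ and $\Sigma_X=\rho^{-1}(\eta)$ for the generic point $\eta=X_0\cap F_X$. You merely spell out the local verification of the latter identification (that $h^{-1}(\{\infty\})=\emptyset$ exactly when $h$ factors through $\R_{\geq 0}$), which the paper leaves implicit.
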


\begin{proof}
This follows from the commutativity of 
\begin{equation*}\begin{CD}
X^\beth @>\trop_X>>\overline{\Sigma}_X\\
@V\rho VV @V\rho VV\\
(X,\overline{\mathcal{O}}_X)@>\phi_X >>F_X
\end{CD}\end{equation*}
and the observations that $\rho^{-1}(X_0)=X^\beth\cap X^{an}$ as well as $\rho^{-1}(X_0\cap F_X)=\Sigma_F$. 
\end{proof}

\begin{definition}
Let $Y$ be a closed subset of $X_0$. The projection 
\begin{equation*}
\Trop_X(Y)=\trop_X(Y^{an}\cap X^\beth)\subseteq \Sigma_X
\end{equation*}
 is called the \emph{tropical variety} associated to $Y$ relative to $X$. 
\end{definition}

\begin{remark}
In \cite{Thuillier_toroidal} Thuillier constructs a strong deformation retraction $\mathbf{p}\mathrel{\mathop:}X^\beth\rightarrow X^\beth$ onto the \emph{skeleton} $\mathfrak{S}(X)$ of $X$. By \cite[Theorem 1.2]{Ulirsch_functroplogsch} there is a homeomorphism $J_X\mathrel{\mathop:}\overline{\Sigma}_X\rightarrow \mathfrak{S}(X)$ making the diagram
\begin{center}\begin{tikzpicture}
  \matrix (m) [matrix of math nodes,row sep=1em,column sep=2em,minimum width=2em]
  {  
  & & \mathfrak{S}(X) \\ 
  X^\beth&  &\\
  & & \overline{\Sigma}_X\\
  };
  \path[-stealth]
    (m-2-1) edge node [above] {$\mathbf{p}$} (m-1-3)
    (m-2-1) edge node [below] {$\trop_X$} (m-3-3)
    (m-3-3) edge node [right] {$J_X$} (m-1-3);
\end{tikzpicture}\end{center}
commute. In particular, this implies $\Trop_X(X_0)=\Sigma_X$.
\end{remark}

\subsection{The case of toric varieties}
Let $N$ be a free and finitely generated abelian group, denote by $N^\vee=\Hom(N,\mathbb{Z})$ its dual, and consider the torus $T=N\otimes_\mathbb{Z}\mathbb{G}_m=\Spec k[N^\vee]$. There is a well-known continuous tropicalization map 
\begin{equation*}\begin{split}
\trop\mathrel{\mathop:}T^{an}&\longrightarrow N_\mathbb{R}=\Hom(N^\vee,\R) \\
x&\longmapsto \big(u\mapsto -\log\vert \chi^u\vert_x\big) \ ,
\end{split}\end{equation*}
where $\chi^u$ denotes the character of $u\in N^\vee$ in the coordinate ring of $T=\Spec k[N^\vee]$. One can define the \emph{tropical variety} $\Trop(Y)$ associated to a closed subset $Y\subseteq T$ to be the image $\trop(Y^{an})$ of $Y^{an}$ in $N_\R$ (see e.g. \cite[Section 4]{Gubler_guide}). 

\begin{proposition}\label{prop_troptoric}
Suppose we are given a toric variety $X=X(\Delta)$ defined by an integral pointed polyhedral fan $\Delta$ in $N_\mathbb{R}$ with big torus $T$. Then we have
\begin{equation*}
\Trop_Z(Y)=\Trop(Y)\cap\Delta \ .
\end{equation*}
\end{proposition}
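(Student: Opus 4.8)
The plan is to reduce the statement to the affine case and then identify both sides explicitly. Since $\Trop_X(Y) = \trop_X(Y^{an}\cap X^\beth)$ is contained in $\Sigma_X$, which by Example \ref{example_conecomplex=fan} is naturally identified with $\Delta\subseteq N_\R$, and since $\Trop(Y)=\trop(Y^{an})\subseteq N_\R$, the assertion is really a comparison of two maps out of $Y^{an}$ into $N_\R$ together with a description of which points of $Y^{an}$ lie in the analytic subdomain $X^\beth$. First I would recall from Example \ref{example_conecomplex=fan} that for an affine toric chart $\Spec k[P]$ with $P$ the toric monoid of a cone $\sigma$, the cone complex is $\Sigma_X = \Hom(P,\R_{\geq 0}) = \sigma$, and that $F_X = \Spec P$ has the generic points of the torus orbits as its points.

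The key step is to show that, on $T^{an}$, the tropicalization map $\trop_X$ of Section 3.2 agrees with the classical map $\trop$. Here I would unwind the definition: a point $x\in T^{an}$ with $\vert\cdot\vert_x\leq 1$ on $k[P]$ is represented by $\underline{x}\colon\Spec R\to (X,\overline\calO_X)$, and $\trop_X(x)$ is the composite $\Spec\Rbar_{\geq 0}\to\Spec R\to(X,\overline\calO_X)\xrightarrow{\phi_X}F_X$, i.e. an element of $\Hom(\Spec\Rbar_{\geq 0},\Spec P) = \Hom(P,\Rbar_{\geq 0})$. Chasing through, this homomorphism sends $p\in P$ to $\val(\underline{x}^\flat(p)) = -\log\vert\chi^p\vert_x$, which is exactly the value of the classical $\trop(x)\in N_\R = \Hom(P^{gp},\R)$ on $p$. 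So on $T^{an}\cap X^\beth$ the two maps coincide under the identification $\Sigma_X=\sigma\subseteq N_\R$; the general (non-affine) case follows by gluing these local identifications, using that $\Sigma_X = \Delta$ as a cone complex.

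It then remains to show that $\trop_X(Y^{an}\cap X^\beth)$ equals $\trop(Y^{an})\cap\Delta$, equivalently that $x\in T^{an}$ lies in $X^\beth$ if and only if $\trop(x)\in\Delta$, and to compare with the behaviour under passing to orbits. For this I would argue locally on an affine piece $\Spec k[P]$: the seminorm $\vert\cdot\vert_x$ is bounded on $k[P]$ precisely when $\vert\chi^p\vert_x\leq 1$ for all $p\in P$, i.e. when $-\log\vert\chi^p\vert_x\geq 0$ for all $p\in P$, which by duality says $\trop(x)\in\Hom(P,\R_{\geq 0}) = \sigma$. Taking the union over the cones $\sigma\in\Delta$ gives $T^{an}\cap X^\beth = \trop^{-1}(\Delta)$, whence $\trop_X(Y^{an}\cap X^\beth) = \trop(Y^{an})\cap\Delta$. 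One subtlety is that $Y^{an}\cap X^\beth$ may contain points mapping to $T$-orbits in $X$ other than $T$ itself; but $\trop_X$ of such a point still lies in $\Sigma_X = \Delta$ by construction, and its image under the classical picture is the corresponding limit point in $N_\R$, so no extra points of $N_\R$ are produced — this is precisely the content of the compatibility square in Proposition \ref{prop_tropwelldef}.

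The main obstacle I expect is not any single computation but keeping the bookkeeping between the three models — the Kato fan $F_X$ with $\Hom(\Spec\Rbar_{\geq 0},-)$, the abstract cone complex $\Sigma_X$, and the concrete fan $\Delta\subseteq N_\R$ — consistent, in particular checking that the sign conventions ($\val$ versus $-\log\vert\cdot\vert$) and the identification $\Hom(P,\R_{\geq 0})=\sigma$ are compatible across charts so that the local comparisons glue to a global one. Once that dictionary is fixed, the proof is a direct unwinding of definitions together with the commutative squares already established in Proposition \ref{prop_tropwelldef}.
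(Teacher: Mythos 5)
Your proposal is correct and follows essentially the same route as the paper, whose entire proof is the assertion that the square comparing $\trop_X$ on $T^{an}\cap X^\beth$ with the classical $\trop$ on $T^{an}$ commutes; you simply unwind the definitions to verify that commutativity. You also make explicit the point the paper leaves implicit, namely that $T^{an}\cap X^\beth=\trop^{-1}(\Delta)$ (boundedness of $\vert\cdot\vert_x$ on $k[P]$ is equivalent to $-\log\vert\chi^p\vert_x\geq 0$ for all $p\in P$), which is exactly what upgrades the inclusion coming from the commutative square to the stated equality.
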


\begin{proof}
It is enough to note that the diagram
\begin{equation*}\begin{CD}
T^{an}\cap X^\beth @>\trop_X>>\Delta \\
@V\subseteq VV @V\subseteq VV\\
T^{an}@>\trop>>N_\mathbb{R}
\end{CD}\end{equation*}
is commutative.
\end{proof}

We are going to use the following technical Lemma in the proof of Proposition \ref{prop_Tropchart}.

\begin{lemma}\label{lemma_tropopendense}
Let $U$ be an open and dense subset of a closed subset $Y$ of an algebraic torus $T$. Then we have
\begin{equation*}
\Trop(Y)=\trop(U^{an}) \ .
\end{equation*}
\end{lemma}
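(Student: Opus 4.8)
The statement to prove is Lemma~\ref{lemma_tropopendense}: for $U$ open and dense in a closed subset $Y$ of a torus $T$, one has $\Trop(Y)=\trop(U^{an})$.

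\medskip

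The plan is to prove the two inclusions separately. The inclusion $\trop(U^{an})\subseteq\Trop(Y)=\trop(Y^{an})$ is immediate, since $U\subseteq Y$ gives $U^{an}\subseteq Y^{an}$ and hence $\trop(U^{an})\subseteq\trop(Y^{an})$. So the content is the reverse inclusion $\Trop(Y)\subseteq\trop(U^{an})$; equivalently, every point of $N_\R$ in the image of $Y^{an}$ is already in the image of $U^{an}$.

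\medskip

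For the reverse inclusion I would argue pointwise. Fix $w\in\Trop(Y)$ and a point $x\in Y^{an}$ with $\trop(x)=w$; replacing $x$ by a point of $\rho^{-1}\big(\rho(x)\big)$ if needed we may take $x$ to be the valuation-theoretic point associated to a morphism $\underline{x}\mathrel{\mathop:}\Spec K\to Y$, where $K$ is a valued field extending $k$ (with value group containing $w$) and the image of $\Spec K$ is a point $y\in Y$. Now $Z\mathrel{\mathop:}=\overline{\{y\}}$ is an irreducible closed subset of $Y$, and $x$ factors through $Z^{an}$. Because $U$ is dense in $Y$, $U\cap Z$ is a nonempty open subset of the irreducible variety $Z$, hence dense in $Z$; in particular its complement $D\mathrel{\mathop:}=Z\setminus U$ is a proper closed subset of $Z$. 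The key reduction is therefore: it suffices to show that $w=\trop(x)$ lies in $\trop\big((U\cap Z)^{an}\big)$ — i.e. that one can move the valued point $\underline{x}$ off the proper closed subset $D$ of $Z$ without changing its tropicalization. This is where Gubler's technique (credited in the acknowledgements) enters: on the very affine variety $Z$, the tropicalization $\trop$ is a continuous, surjective-onto-$\Trop(Z)$ map, and by Gubler's version of the Bieri--Groves theorem together with the structure of $Z^{an}$ one can realize every point of $\Trop(Z)$ by a valued point of $Z$ whose underlying scheme point is the generic point of $Z$ — or, more directly, one invokes that the subset of $Z^{an}$ mapping to the generic point of $Z$ is dense and its image under $\trop$ is already all of $\Trop(Z)$. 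Either way one produces $x'\in Z^{an}$, lying over the generic point $\eta_Z$ of $Z$, with $\trop(x')=w$. Since $\eta_Z\in U\cap Z$, the point $x'$ lies in $(U\cap Z)^{an}\subseteq U^{an}$, and $w=\trop(x')\in\trop(U^{an})$, as desired.

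\medskip

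The main obstacle is the last step: showing that every $w\in\Trop(Z)$ is hit by a point of $Z^{an}$ lying over the generic point of $Z$ (equivalently, by a valuation on the function field $k(Z)$ rather than on a quotient). The clean way to see this is to use that $\trop\mathrel{\mathop:}Z^{an}\to N_\R$ is the composition $Z^{an}\hookrightarrow T^{an}\to N_\R$ and that for the very affine $Z$ the preimage of a point $w$ in the relative interior of a maximal cone of $\Trop(Z)$ contains the "Gauss-type" point attached to $w$, which has residue field of transcendence degree $\dim Z$ over $k$ and hence necessarily maps to $\eta_Z$; the lower-dimensional strata of $\Trop(Z)$ are then handled by a limiting/closure argument using continuity of $\trop$ and the fact that such points form a dense subset of $Z^{an}$. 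Alternatively, one can avoid dimension bookkeeping entirely by a Noetherian induction on $\dim Y$: apply the inductive hypothesis to each irreducible component of the proper closed subset $Y\setminus U$ (intersected appropriately) to conclude that any $w\in\Trop(Y)$ not in $\trop(U^{an})$ would have to come from $\Trop\big((Y\setminus U)^{\mathrm{red}}\big)$, but that set has strictly smaller dimension, and one checks it cannot exhaust a maximal cone of $\Trop(Y)$ unless $Y\setminus U$ already contains a component of $Y$ — impossible since $U$ is dense. I would present the induction-free version via Gauss points as the main line and remark on the inductive alternative.
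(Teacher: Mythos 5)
Your easy inclusion is fine, but the main argument has a genuine gap at its very first reduction. You pass from $Y$ to $Z=\overline{\{\rho(x)\}}$ and assert that, since $U$ is dense in $Y$, the set $U\cap Z$ is a nonempty open subset of $Z$. Density of $U$ in $Y$ does not imply that $U$ meets every irreducible closed subset of $Y$: take $Y$ an irreducible curve in $T$, $U=Y-\{y_0\}$ for a closed point $y_0$, and $x$ the rigid point attached to $y_0$ (so $w=\trop(x)=0$); then $Z=\{y_0\}$ and $U\cap Z=\emptyset$, so your ``key reduction'' to showing $w\in\trop\big((U\cap Z)^{an}\big)$ asks for something impossible, even though the lemma itself holds here. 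The correct reduction is to an irreducible component $Z$ of $Y$ with $w\in\Trop(Z)$ (every component meets $U$, hence $\eta_Z\in U$), not to the closure of $\rho(x)$. Even after that repair, the remaining claim --- that every point of $\Trop(Z)$ is attained by a point of $Z^{an}$ lying over $\eta_Z$ --- is essentially equivalent to the lemma, and your sketch of it is incomplete: the Gauss-point argument plausibly covers relative interiors of maximal cones, but the ``limiting/closure argument'' for lower-dimensional faces only yields that the image of the generic fibre is \emph{dense} in $\Trop(Z)$ (continuity gives containment of the image of a closure in the closure of the image, not the reverse), so without an additional compactness or properness input you cannot conclude that the boundary points are actually hit.

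For comparison, the paper's proof supplies exactly the missing tool. It chooses a nontrivially valued complete algebraically closed field $K$ with a point $t\in Y(K)$ tropicalizing to $w$, translates by $t^{-1}$ to reduce to $w=0$, forms the flat closure $\mathcal{Y}_K$ of $Y_K$ in the canonical model $\mathbb{T}_{K^\circ}$, notes that the special fibre is nonempty by the fundamental theorem, and then applies the lifting result \cite[Proposition 4.14]{Gubler_guide}, which produces a $K^\circ$-point of $\mathcal{Y}_K$ reducing to a given point of the special fibre \emph{and lying in the prescribed dense open subset $U$}. That lifting statement is precisely what lets one move off the proper closed subset $Y-U$ without changing the tropicalization; some such input is unavoidable, and your proposal does not yet provide a substitute for it.
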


\begin{proof}
The inclusion $\supseteq$ is trivial. For the converse inclusion consider an element $w\in\Trop(Y)$.  Our goal is to find a point $q\in U^{an}$ such that $\trop(q)=w$. 

We can find a non-trivially valued complete algebraically closed field $K$ extending $k$ such that $\trop(t)=w$ for an element $t\in Y(K)$. Passing to $t^{-1}Y_K$ we can assume $w=0$ and $t=e$, the identity element in $T(K)$. Write $K^\circ$ for the valuation ring of $K$ and $\tilde{K}$ for the residue field of $K^\circ$. 

Denote the canonical $K^\circ$-model of $T_K$ over $K_0$ by $\mathbb{T}_{K^\circ}$ and write $\mathcal{Y}_K$ for the closure of $Y_K$ in $\mathbb{T}_{K^\circ}$, i.e. the unique closed subscheme of $\mathbb{T}_{K^\circ}$ that has $Y_K$ as its generic fiber and is flat over $K^\circ$. By the fundamental theorem of tropical algebraic geometry the special fiber $\mathcal{Y}_{K,s}$ of $\mathcal{Y}_K$ over $\tilde{K}$, is non-empty (see e.g. \cite[Section 5]{Gubler_guide}). By \cite[Proposition 4.14]{Gubler_guide}  for every $\tilde{K}$-rational point $\tilde{q}$ of the special fiber $\mathcal{Y}_{K,s}$ there is a $K^\circ$-rational point $q$ of $\mathcal{Y}_K$ mapping to $\tilde{q}$ via the reduction map, that is also contained in $U$. Since $\tilde{q}\in\mathbb{T}_s(\tilde{K})=(\tilde{K}^\ast)^n$, all of the components of $\tilde{q}$ are nonzero. Therefore all components of $q$ have norm $1$, which implies $\trop(q)=0$. 
\end{proof}

\subsection{Structure of tropical varieties}\label{section_structure}

We are now going to prove Theorem \ref{thm_BieriGroves} based on the following description of the local structure of $\Trop_X(Y)$. 

Let $Y$ be a closed subset of the open stratum $X_0$ of a log-regular variety $X$. Given $\xi\in F_X$, let $\gamma\mathrel{\mathop:}U\rightarrow Z$ be a $\xi$-small chart into an affine toric variety $Z=\Spec k[P]$ with big torus $T=\Spec k[P^{gp}]$. As an immediate consequence of Lemma \ref{lemma_charmorchart} we obtain the following Lemma.

\begin{lemma}\label{lemma_tropchart}
The $\xi$-small toric chart $\gamma$ induces an isomorphism $\gamma^\Sigma\mathrel{\mathop:}\overline{\Sigma}_U\xrightarrow{\sim}\overline{\Sigma}_Z$ such that the diagram
\begin{equation*}\begin{CD}
U^\beth @>\trop_U>> \overline{\Sigma}_U\\
@V\gamma^\beth VV @V\gamma^\Sigma V\simeq V \\
Z^\beth @>\trop_Z>> \overline{\Sigma}_Z
\end{CD}\end{equation*}
commutes.
\end{lemma}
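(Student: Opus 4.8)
The plan is to deduce Lemma~\ref{lemma_tropchart} from the already-established compatibility of the characteristic morphism with a $\xi$-small chart (Lemma~\ref{lemma_charmorchart}) together with the functoriality of the tropicalization construction recorded in Proposition~\ref{prop_tropwelldef}. Since $\gamma\colon U\to Z$ is an \'etale morphism of log-regular varieties that is compatible with the characteristic morphisms, it induces a morphism of Kato fans $\gamma^F\colon F_U\to F_Z$, which by Lemma~\ref{lemma_charmorchart} is in fact an \emph{isomorphism} $F_U\xrightarrow{\sim}F_Z=\Spec P$. The cone complex and the extended cone complex are functorial in the Kato fan (they are defined as $\Hom(\Spec\R_{\geq 0},-)$ and $\Hom(\Spec\Rbar_{\geq 0},-)$ respectively), so $\gamma^F$ induces a homeomorphism $\gamma^\Sigma\colon\overline{\Sigma}_U\xrightarrow{\sim}\overline{\Sigma}_Z$; this is how I would \emph{define} $\gamma^\Sigma$.

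The remaining task is to check that the square in the statement commutes. First I would record that an \'etale morphism $\gamma\colon U\to Z$ induces a morphism $\gamma^\beth\colon U^\beth\to Z^\beth$ on Thuillier's analytic domains (it restricts the morphism $\gamma^{an}\colon U^{an}\to Z^{an}$, and \'etale-ness guarantees that bounded seminorms are sent to bounded seminorms since $\calO_{Z,\gamma(x)}\to\calO_{U,x}$ is a local homomorphism of local rings). Then I would trace through the definition of $\trop_X$ as given just before Proposition~\ref{prop_tropwelldef}: a point $x\in U^\beth$ is represented by $\underline{x}\colon\Spec R\to(U,\overline\calO_U)$ for a valuation ring $R$ extending $k$, and $\trop_U(x)$ is the composite $\Spec\Rbar_{\geq 0}\xrightarrow{\val^\#}\Spec R\xrightarrow{\underline{x}}(U,\overline\calO_U)\xrightarrow{\phi_U}F_U$. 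Composing this with $\gamma^F$ and using the commutativity of the square in Lemma~\ref{lemma_charmorchart}, namely $\gamma^F\circ\phi_U=\phi_Z\circ\gamma$, one gets exactly the composite defining $\trop_Z(\gamma^\beth(x))$, because $\gamma\circ\underline{x}$ is precisely the representing morphism for the point $\gamma^\beth(x)\in Z^\beth$. Matching $\gamma^\Sigma$ (which is post-composition with $\gamma^F$ at the level of $\Hom(\Spec\Rbar_{\geq 0},-)$) against this identity yields $\gamma^\Sigma\circ\trop_U=\trop_Z\circ\gamma^\beth$.

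I do not expect any serious obstacle: the lemma is essentially bookkeeping, since all the substance is contained in Lemma~\ref{lemma_charmorchart} and in the functorial definition of $\trop$. The one point that deserves care is checking that $\gamma^\beth$ is well-defined, i.e.\ that $\gamma^{an}$ carries $U^\beth$ into $Z^\beth$; this is where \'etale-ness (rather than mere flatness or being birational) is used, via the fact that it induces isomorphisms on completed local rings and hence local homomorphisms, so that the boundedness condition $|a|_x\leq 1$ is preserved. Alternatively, since the whole diagram is already known for the characteristic morphisms by Lemma~\ref{lemma_charmorchart} and tropicalization factors through it via the reduction map (Proposition~\ref{prop_tropwelldef}), one can phrase the argument so that $\gamma^\beth$ only needs to be compatible with the reduction maps $r\colon U^\beth\to U$ and $r\colon Z^\beth\to Z$, which is immediate from functoriality of Thuillier's reduction map.
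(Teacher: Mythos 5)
Your argument is exactly the paper's intended one: the paper offers no separate proof, stating only that the lemma is ``an immediate consequence of Lemma~\ref{lemma_charmorchart},'' and your write-up is precisely the unwinding of that---define $\gamma^\Sigma$ by applying $\Hom(\Spec\Rbar_{\geq 0},-)$ to the isomorphism $\gamma^F$, then chase the definition of $\trop$ through the commutative square of Lemma~\ref{lemma_charmorchart}. The only quibble is that \'etaleness is not actually needed for $\gamma^\beth$ to be well-defined: $X\mapsto X^\beth$ is functorial for arbitrary morphisms of $k$-varieties, since precomposing a representing morphism $\Spec R\to U$ with $\gamma$ (equivalently, pulling back a bounded seminorm along $\calO_Z\to\gamma_*\calO_U$) again yields a bounded seminorm.
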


Denote the closure of $\gamma(U\cap Y)$ in $T$ by $\overline{Y}^T$.

\begin{proposition}\label{prop_Tropchart}
In the situation of Lemma \ref{lemma_tropchart} we have \begin{equation*}
\gamma^\Sigma\big(\Trop_U(Y)\big)=\Trop(\overline{Y}^T)\cap \Sigma_Z \ .
\end{equation*}
\end{proposition}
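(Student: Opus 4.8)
The plan is to reduce the identity to a computation inside the toric variety $Z$, using the chart isomorphism $\gamma^\Sigma$ of Lemma \ref{lemma_tropchart} and the comparison between $\trop_Z$ and the classical torus tropicalization of Proposition \ref{prop_troptoric}. By Lemma \ref{lemma_tropchart} the square relating $\trop_U$ and $\trop_Z$ commutes with vertical isomorphism $\gamma^\Sigma$, so $\gamma^\Sigma(\Trop_U(Y)) = \gamma^\Sigma\big(\trop_U(Y^{an}\cap U^\beth)\big) = \trop_Z\big(\gamma^\beth(Y^{an}\cap U^\beth)\big)$. Since $\gamma$ is étale and $\gamma^{-1}(T) = U\cap X_0$, the morphism $\gamma$ carries $Y\cap U$ (a closed subset of $U\cap X_0$) into $T$, and $\gamma^\beth$ carries $(Y\cap U)^{an}\cap U^\beth$ into $T^{an}\cap Z^\beth$. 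Applying Proposition \ref{prop_troptoric} to $Z=Z(\gamma(\xi))$, whose fan $\Delta$ is the single cone $\sigma_Z$ with $\Sigma_Z = \sigma_Z$, reduces everything to showing
\begin{equation*}
\trop\big((Y\cap U)^{an}\cap Z^\beth\big) = \Trop(\overline{Y}^T)\cap\Sigma_Z\ ,
\end{equation*}
where $\trop\colon T^{an}\to N_\R$ is the classical tropicalization map, $N = (P^{gp})^\vee$, and $\overline{Y}^T$ is the closure of $\gamma(Y\cap U)$ in $T$.

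For this torus-level statement, note first that $\gamma(Y\cap U)$ is an open (hence dense) subset of its closure $\overline{Y}^T$ in $T$: indeed $\gamma$ is étale, so $\gamma(Y\cap U)$ is locally closed in $T$, hence open in $\overline{Y}^T$. Lemma \ref{lemma_tropopendense} then gives $\Trop(\overline{Y}^T) = \trop\big(\gamma(Y\cap U)^{an}\big)$. So we must match $\trop\big(\gamma(Y\cap U)^{an}\big)\cap\Sigma_Z$ with $\trop\big(\gamma(Y\cap U)^{an}\cap Z^\beth\big)$. The inclusion $\subseteq$ of Proposition \ref{prop_Tropchart} then follows because for a point $q$ of $\gamma(Y\cap U)^{an}$ whose image lies in $\sigma_Z = \Hom(P,\R_{\geq 0})\subseteq N_\R$, the coordinates $\chi^u$ for $u\in P$ all satisfy $-\log|\chi^u|_q\geq 0$, i.e.\ $|\chi^u|_q\leq 1$; since $P$ generates the coordinate ring of $Z = \Spec k[P]$, this says exactly $q\in Z^\beth$. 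The reverse inclusion $\supseteq$ is immediate: $\Trop_Z(Y)\subseteq\Sigma_Z$ always holds, and by Proposition \ref{prop_troptoric} $\trop(Z^\beth$-points$)$ already lands in $\Sigma_Z$, so any point of $\Trop(\overline{Y}^T)\cap\Sigma_Z = \trop(\gamma(Y\cap U)^{an})\cap\Sigma_Z$ is hit by a point of the analytification lying over $\sigma_Z$, which by the same coordinate argument lies in $Z^\beth$.

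I expect the main obstacle to be the careful handling of \emph{which} subset gets closed up and where: $\Trop_U(Y)$ is defined via $Y^{an}\cap U^\beth$, but $Y$ here means a closed subset of $X_0$, so inside the chart the relevant object is $Y\cap U$, whose image $\gamma(Y\cap U)$ need not be closed in $T$ — it is only locally closed, which is exactly why $\overline{Y}^T$ (and with it Lemma \ref{lemma_tropopendense}) enters. One must check that $Y\cap U$ is closed in $U\cap X_0$ (clear, since $Y$ is closed in $X_0$ and $U\cap X_0$ is open in $X_0$) and that $\gamma|_{Y\cap U}$ is still étale onto its image, so that $\gamma(Y\cap U)$ is open in $\overline{Y}^T$; then Lemma \ref{lemma_tropopendense} applies verbatim. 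A minor additional point is the identification $Y^{an}\cap U^\beth = (Y\cap U)^{an}\cap U^\beth$ inside $U^{an}$, and the compatibility of analytification and $(-)^\beth$ with the open immersion $U\hookrightarrow X$, which is standard. Apart from these bookkeeping matters, the argument is a direct assembly of Lemma \ref{lemma_tropchart}, Proposition \ref{prop_troptoric}, and Lemma \ref{lemma_tropopendense} together with the elementary description of $Z^\beth$ as the locus $\{|\chi^u|\leq 1 : u\in P\}$.
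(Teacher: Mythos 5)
Your overall strategy coincides with the paper's: reduce to the toric chart via Lemma \ref{lemma_tropchart}, compare $\trop_Z$ with the classical torus tropicalization via Proposition \ref{prop_troptoric}, and use Lemma \ref{lemma_tropopendense} to replace $\gamma(Y\cap U)$ by its closure $\overline{Y}^T$. Your extra care about the matching of $Z^\beth$ with the locus $\{\vert\chi^u\vert\leq 1 : u\in P\}$ is a welcome expansion of what the paper leaves implicit in ``applying Proposition \ref{prop_troptoric} and Lemma \ref{lemma_tropchart} finishes the proof.''

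There is, however, one incorrect justification at the step you yourself single out as the main obstacle. You claim that since $\gamma$ is \'etale, $\gamma(Y\cap U)$ is locally closed in $T$, hence open in $\overline{Y}^T$, so that Lemma \ref{lemma_tropopendense} applies directly to it. This is not a valid principle: \'etale morphisms are open maps, not closed ones, and the image of a closed (or locally closed) subset under a non-finite \'etale morphism is in general only \emph{constructible}, not locally closed. (Restricting $\gamma$ to $Y\cap U$ does not help either; that restriction is unramified and quasi-finite onto a lower-dimensional image, and quasi-finite non-proper maps need not have locally closed images.) The paper's proof invokes exactly the right tool here, namely Chevalley's theorem: $\gamma(Y\cap U)$ is constructible and dense in $\overline{Y}^T$, hence contains a subset $V$ that is open and dense in $\overline{Y}^T$; applying Lemma \ref{lemma_tropopendense} to $V$ and sandwiching, $\Trop(\overline{Y}^T)=\trop(V^{an})\subseteq\trop\big(\gamma(Y\cap U)^{an}\big)\subseteq\Trop(\overline{Y}^T)$, yields the identity $\trop\big(\gamma(Y\cap U)^{an}\big)=\Trop(\overline{Y}^T)$ that you need. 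With this repair your argument goes through and matches the paper's.
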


\begin{proof}
By Chevalley's theorem \cite[1.8.4]{EGAIVi} the subset $\gamma(Y\cap U)$ is constructible. Therefore Lemma \ref{lemma_tropopendense} implies
\begin{equation*}
\trop\big(\gamma(Y\cap U)^{an}\big)=\Trop(\overline{Y}^T) \ .
\end{equation*}
Applying Proposition \ref{prop_troptoric} and Lemma \ref{lemma_tropchart} finishes the proof. 
\end{proof}

\begin{proof}[Proof of Theorem \ref{thm_BieriGroves}]
Choose a covering $U_i$ of $X$ by $\xi_i$-small toric charts $\gamma_i\mathrel{\mathop:}U_i\rightarrow Z_i=\Spec k[P_i]$ for some $\xi_i\in F_X$ and set $T_i=\Spec k[P_i^{gp}]$. By Proposition \ref{prop_Tropchart} we have 
\begin{equation*}
\Trop_X(Y)=\bigcup_{i}\big(\gamma_i^\Sigma\big)^{-1}\big(\Trop(\overline{Y}^{T_i})\cap\Sigma_{Z_i}\big) \ .
\end{equation*}
using the isomorphisms coming from Lemma \ref{lemma_tropchart}.

The classical Bieri-Groves Theorem (see \cite[Theorem A]{BieriGroves_polyhedral} and \cite[Theorem 2.2.3]{EinsiedlerKapranovLind_amoebae}) states that all $\Trop(\overline{Y}^{T_i})$ have the structure of a rational polyhedral fan of dimension $\dim Y$. The intersections $\Trop(\overline{Y}^{T_i})\cap\Sigma_{Z_i}$ are therefore rational polyhedral fans of dimension $\leq \dim Y$ and, since the $\gamma_i^\Sigma$ are isomorphisms of rational polyhedral cone complexes, this implies that the tropical variety $\Trop_X(Y)$ itself carries the structure of a rational polyhedral cone complex of dimension $\leq\dim Y$. 
\end{proof}

\begin{example}\label{example_dimdroptoric}
Let $X=X(\Delta)$ be a toric variety defined by a rational polyhedral fan $\Delta$ with big algebraic torus $T$. Moreover suppose that $Y$ is a closed subset of $T$ of dimension greater than $\dim\Delta$. Then  Propositon \ref{prop_troptoric} implies that 
\begin{equation*}
\dim \Trop_X(Y)\leq\dim \Delta<\dim Y \ .
\end{equation*}
\end{example}

\begin{example}\label{example_dimdropP1xP1}
Suppose that $X=\mathbb{P}^1\times\mathbb{P}^1$ and $X_0$ is the complement of the smooth divisor $D=\{\infty\}\times\mathbb{P}^1$. As seen in Example \ref{example_divlogstr} the divisorial logarithmic structure $M_D$ makes $(X,M_D)$ into a log-regular variety. Consider now the $1$-dimensional closed subset $Y=\{0\}\times\mathbb{P}^1$ of $X_0$. The cone complex $\Sigma_X$ consists of a copy of $\mathbb{R}_{\geq 0}$ and using Tevelev's Lemma \ref{lemma_Tevelevslemma} we see that the tropicalization $\Trop_X(Y)$ is given by the origin of $\mathbb{R}_{\geq 0}$.
\end{example}

The above Examples \ref{example_dimdroptoric} and \ref{example_dimdropP1xP1} illustrate that in Theorem \ref{thm_BieriGroves} the dimension of $\Trop_X(Y)$ can be less than the dimension of $Y$. 

\subsection{Invariance under toroidal modifications} Let $F$ be a fine and saturated Kato fan. Proper subdivisions $F'\rightarrow F$ induce piecewise $\Z$-linear homeomorphisms $\Sigma_{F'}\simeq\Sigma_F$ 
and are in one-to-one correspondence with finite integral polyhedral subdivisions of the cone complex $\Sigma_F$.

\begin{proposition}
Let $X$ be a log-regular variety and let $X'\rightarrow X$ be a toroidal modification. Given a closed subset $Y\subseteq X_0$, under the piecewise $\mathbb{Z}$-linear homeomorphic identification $\Sigma_{X'}\simeq\Sigma_X$ we have 
\begin{equation*}
\Trop_X(Y)=\Trop_{X'}(Y)\ .
\end{equation*} 
\end{proposition}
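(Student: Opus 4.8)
The plan is to reduce the statement to a purely toric (fan-theoretic) computation via toric charts, where the invariance of tropicalization under subdivision is classical, and then to glue the local identifications. First I would recall that a toroidal modification $X' \to X$ is, by construction, the one induced by a proper subdivision $F' \to F = F_X$, and that such a subdivision restricts to a proper subdivision on each affine open $\Spec P \subseteq F$; under the reduction maps this corresponds to a finite integral polyhedral subdivision of the cone $\sigma_U = \Hom(P,\R_{\geq 0})$, so the underlying sets of $\Sigma_{X'}$ and $\Sigma_X$ are canonically identified (only the cone structure changes). The toroidal modification also induces an isomorphism $X_0' \simeq X_0$, so a closed subset $Y \subseteq X_0$ may be regarded as a closed subset of $X_0'$ as well, and $Y^{an} \cap (X')^\beth$ maps to $Y^{an} \cap X^\beth$ under $(X')^{an} \to X^{an}$.

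Next I would work locally. Cover $X$ by $\xi$-small toric charts $\gamma \colon U \to Z = \Spec k[P]$ as in the proof of Theorem~\ref{thm_BieriGroves}. The proper subdivision $F' \to F$ restricted over $F_U = \Spec P$ corresponds to a subdivision of the fan $F_Z$, i.e. to a toric modification $Z' \to Z$, and the preimage $U' \subseteq X'$ of $U$ is the log-regular variety with a $\xi'$-small toric chart $\gamma' \colon U' \to Z'$ compatible with $\gamma$. By Proposition~\ref{prop_Tropchart} we have $\gamma^\Sigma(\Trop_U(Y)) = \Trop(\overline{Y}^T) \cap \Sigma_Z$ and likewise $(\gamma')^\Sigma(\Trop_{U'}(Y)) = \Trop(\overline{Y}^{T}) \cap \Sigma_{Z'}$ — note the ambient torus $T = \Spec k[P^{gp}]$ is unchanged by a toric modification, and $\overline{Y}^T$ is the same very affine subvariety in both cases. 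Since $\Sigma_Z$ and $\Sigma_{Z'}$ have the same underlying set $\sigma_U \subseteq N_\R$ (a subdivision does not change the support), the right-hand sides coincide as subsets of $N_\R$. Chasing through the compatible identifications $\Sigma_{U'} \simeq \Sigma_{Z'}$, $\Sigma_U \simeq \Sigma_Z$ from Lemma~\ref{lemma_tropchart} and the canonical $\Sigma_{U'} \simeq \Sigma_U$, this gives $\Trop_{U'}(Y) = \Trop_U(Y)$ as subsets of the common cone complex.

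Finally I would glue: $\Trop_X(Y) = \bigcup_i \Trop_{U_i}(Y)$ and $\Trop_{X'}(Y) = \bigcup_i \Trop_{U_i'}(Y)$, where $U_i'$ is the preimage of $U_i$, because tropicalization is compatible with restriction to opens (both $\trop_X$ and the $\trop_{U_i}$ arise from the characteristic morphism, which is local, and the cone complexes of the $U_i$ are exactly the subcomplexes $r^{-1}(F_{U_i})$ of $\Sigma_X$; the same holds with primes). Since the identification $\Sigma_{X'} \simeq \Sigma_X$ is the one induced by $F' \to F$ and restricts to the local identifications $\Sigma_{U_i'} \simeq \Sigma_{U_i}$ used above, the two unions agree term by term, hence $\Trop_X(Y) = \Trop_{X'}(Y)$.

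I expect the main obstacle to be bookkeeping rather than a genuine difficulty: one must verify carefully that the piecewise $\Z$-linear homeomorphism $\Sigma_{X'} \simeq \Sigma_X$ really is compatible — on each chart — with the identification of $\Sigma_{Z'}$ and $\Sigma_Z$ with the same cone $\sigma_U$, so that the locally-proven equalities $\Trop_{U_i'}(Y) = \Trop_{U_i}(Y)$ patch to the global statement. This amounts to checking that all the squares (relating $\gamma^\Sigma$, $(\gamma')^\Sigma$, the subdivision maps, and the reduction maps) commute, which follows from functoriality of the cone-complex construction in $F$ and of $\trop$ in the log-regular variety, but deserves to be spelled out. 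The one substantive input, namely that $\Trop(\overline{Y}^T)$ does not change under a toric modification of the ambient toric variety, is immediate from Proposition~\ref{prop_troptoric} since the classical tropicalization $\Trop(\overline{Y}^T) \subseteq N_\R$ depends only on $T$ and $\overline{Y}^T$, not on the chosen toric compactification.
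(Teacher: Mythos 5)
Your argument is correct in substance, but it takes a genuinely different route from the paper. The paper's proof is a two-line functoriality argument: the defining commutative square of characteristic morphisms
\begin{equation*}\begin{CD}
(X',\overline{\mathcal{O}}_{X'})@>\phi_{X'}>> F'\\
@VVV @VVV\\
(X,\overline{\mathcal{O}}_X)@>\phi_X>> F
\end{CD}\end{equation*}
induces a commutative square relating $\trop_{X'}$ and $\trop_X$, and since the modification restricts to an isomorphism $X_0'\simeq X_0$ (hence $(X_0')^{an}\simeq X_0^{an}$), the images of $Y^{an}\cap X^\beth$ under the two tropicalization maps agree under the identification $\overline{\Sigma}_{X'}\simeq\overline{\Sigma}_X$. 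You instead localize to toric charts, apply Proposition~\ref{prop_Tropchart} on both levels, and observe that $\Trop(\overline{Y}^T)\cap\sigma_U$ is insensitive to subdividing $\sigma_U$; this is valid, and it has the merit of making the statement concrete (the whole proposition reduces to the fact that classical tropicalization depends only on $T$ and $\overline{Y}^T$, not on the fan). The cost is the bookkeeping you yourself flag, plus one point you gloss over: the preimage $U'$ of a $\xi$-small chart is \emph{not} itself a $\xi'$-small chart into an affine toric variety, since $Z'$ is the (non-affine) toric modification of $Z$; to invoke Proposition~\ref{prop_Tropchart} literally you must further cover $U'$ by $\xi'$-small charts, one for each cone of the subdivision of $\sigma_U$, and take the union over those cones to recover $\Trop(\overline{Y}^T)\cap\sigma_U$. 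That repair is routine, but it is exactly the kind of chart-gluing the paper's functorial argument avoids entirely.
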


\begin{proof}
The commutative diagram
\begin{equation*}\begin{CD}
(X',\overline{\mathcal{O}}_{X'})@>\phi_{X'}>> F'\\
@VVV @VVV\\
(X,\overline{\mathcal{O}}_X)@>\phi_X>> F
\end{CD}\end{equation*}
on the level of characteristic morphisms naturally induces a commutative diagram 
\begin{equation*}\begin{CD}
(X')^\beth @>\trop_{X'}>> \overline{\Sigma}_{X'}\\
@VVV @VVV\\
X^\beth @>\trop_X>>\overline{\Sigma}_X 
\end{CD}\end{equation*}
on the level of tropicalization maps. Since $X'_0\simeq X_0$, we have $(X_0')^{an}\simeq X_0^{an}$ and therefore $\Trop_{X'}(Y)=\Trop_X(Y)$.
\end{proof}


\section{Tropical compactification}

Our approach to the proof of Theorem \ref{thm_tropcomplogreg} (i) formally resembles the one of \cite[Proposition 2.3]{Tevelev_tropcomp}. In particular, we also have a version of \cite[Lemma 2.2]{Tevelev_tropcomp}, which is known as Tevelev's Lemma. 

\begin{lemma}[Tevelev's Lemma]\label{lemma_Tevelevslemma}
Let $Y$ be closed subset of $X_0$. The closure $\overline{Y}$ of $Y$ in $X$ intersects a stratum $E=E(\xi)$ if and only if the tropical variety $\Trop_X(Y)$ intersects the relatively open cone $\mathring{\sigma}=r^{-1}(\xi)$ corresponding to it.
\end{lemma}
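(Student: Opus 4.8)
The plan is to reduce the statement to the analogous fact for toric varieties, which is the content of \cite[Lemma 2.2]{Tevelev_tropcomp}, by working locally on a small toric chart. The whole situation is local near the stratum $E=E(\xi)$, so first I would pick a $\xi$-small toric chart $\gamma\colon U\to Z=\Spec k[P]$ with big torus $T=\Spec k[P^{gp}]$, as constructed in the subsection on toric charts. By the strata-cone correspondence (Corollary \ref{cor_stratacone}) together with Lemma \ref{lemma_charmorchart}, the stratum $E$ is the unique closed stratum of $U$ and corresponds under $\gamma^F$ to the closed point of $\Spec P$, i.e. to the relatively open cone $\mathring\sigma = r^{-1}(\xi)$, which is the relative interior of $\sigma_Z=\Hom(P,\R_{\geq 0})=\Sigma_Z$. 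Since $\overline{Y}\cap E$ is nonempty if and only if $\overline{Y}\cap U$ meets $E$, and $\gamma$ is \'etale (in particular open with $\gamma^{-1}$ of the torus-fixed stratum of $Z$ equal to $E\cap U$), this reduces us to the toric picture: I want to show $\overline{Y}\cap U$ meets $E$ iff $\Trop_U(Y)$ meets $\mathring\sigma$.

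Next I would translate to the torus $T$. Let $\overline{Y}^T$ denote the closure of $\gamma(U\cap Y)$ in $T$, and let $\overline{\overline{Y}^T}$ be its closure in $Z$; since $\gamma$ is \'etale, $\gamma(U\cap Y)$ is a dense constructible subset of (an open part of) $\overline{\overline{Y}^T}\cap \gamma(U)$, and the closure of $\gamma(U\cap Y)$ in $Z$ meets the closed torus orbit of $Z$ exactly when $\overline{Y}\cap U$ meets $E$, because $\gamma$ restricts to an \'etale morphism carrying $E\cap U$ into that closed orbit and carrying $U\cap Y$ into the torus $T$. On the tropical side, Proposition \ref{prop_Tropchart} gives $\gamma^\Sigma\big(\Trop_U(Y)\big)=\Trop(\overline{Y}^T)\cap\Sigma_Z$, and $\gamma^\Sigma$ identifies $\mathring\sigma$ with the relative interior of $\Sigma_Z=\sigma_Z$. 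So the claim becomes: the closure of $\overline{Y}^T$ in the affine toric variety $Z=Z(\sigma)$ meets the closed torus orbit $E_\sigma$ if and only if $\Trop(\overline{Y}^T)$ meets $\mathring\sigma$. This is precisely Tevelev's original lemma \cite[Lemma 2.2]{Tevelev_tropcomp} (equivalently, the orbit-stratum description of the fundamental theorem of tropical geometry: the closure meets the orbit $O_\sigma$ iff $\Trop(\overline{Y}^T)$ meets $\mathring\sigma$), applied to the cone $\sigma$ defining $Z$.

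The one subtlety to handle carefully is that $\gamma$ is only \'etale, not an isomorphism onto its image, so I need to be sure that ``$\overline{Y}\cap U$ meets $E$'' really does correspond to ``$\overline{\overline{Y}^T}$ meets $E_\sigma$'' and not to some proper subset of the orbit; but this follows from $\gamma$ being \'etale (hence flat and open) and from $\gamma^{-1}(T)=U\cap X_0$ together with $\gamma^{-1}(E_\sigma)\supseteq E\cap U$, so that a point of $\overline{Y}\cap E$ maps into $\overline{\gamma(U\cap Y)}\cap E_\sigma$, and conversely openness of $\gamma$ lets one lift a point of the closure back. I expect this bookkeeping — matching up closures under the \'etale chart, and citing the orbit-refined form of the fundamental theorem of tropical geometry correctly — to be the main, though routine, obstacle; the geometric heart of the statement is entirely absorbed into Tevelev's Lemma for affine toric varieties, which I am free to invoke. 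An alternative, more self-contained route would be to argue directly with $X^\beth$ and the reduction map: a point of $\overline{Y}\cap E$ is the reduction of some point $q\in Y^{an}\cap X^\beth$ by properness considerations on $X^\beth$, and $\trop_X(q)$ then lies in $\mathring\sigma$ by the commutativity of the reduction square in Proposition \ref{prop_tropwelldef}; conversely a point of $\Trop_X(Y)\cap\mathring\sigma$ lifts to $q\in Y^{an}\cap X^\beth$ with $r(q)\in E$, and $r(q)\in\overline{Y}$ since $q$ specializes into $\overline{Y}$. I would present the first (chart-based) argument as the main proof since it reuses the already-developed machinery most directly.
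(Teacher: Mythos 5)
Your chart-based main argument has a genuine gap in the direction ``$\Trop_X(Y)\cap\mathring{\sigma}\neq\emptyset\Rightarrow\overline{Y}\cap E\neq\emptyset$'', located exactly at the step you call routine: lifting a point of $\overline{\gamma(Y\cap U)}^{Z}\cap E_\sigma$ back to a point of $\overline{Y}\cap E$. \'Etaleness and openness of $\gamma$ do not give this, because the closure of $\gamma(Y\cap U)$ in $Z$ can acquire boundary points that are not in $\gamma(U)$ at all (and even over points of $\gamma(U)$ the fibre of $\gamma$ need not meet $\overline{Y\cap U}$). Concretely: let $Z=\mathbb{A}^1\times\mathbb{G}_m$ with closed orbit $O=\{0\}\times\mathbb{G}_m$ and cone $\sigma=\R_{\geq 0}$, let $X=U=Z-\{(0,1)\}$ with the divisorial logarithmic structure, let $\gamma$ be the open immersion (a $\xi$-small chart for $\xi$ the generic point of $O$), and let $Y=\mathbb{G}_m\times\{1\}\subseteq T=X_0$. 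Then $\overline{Y}^{T}=Y$, $\Trop(\overline{Y}^{T})=\R$ meets $\mathring{\sigma}=\R_{>0}$, and the closure of $Y$ in $Z$ does meet $O$ (at the one point $(0,1)$ that was removed from $U$); nevertheless $\overline{Y}\cap E=\emptyset$. So the implication ``$\Trop(\overline{Y}^{T})$ meets $\mathring{\sigma}$ $\Rightarrow$ $\overline{Y}$ meets $E$'', which is the composite of your steps after passing to the chart, is simply false. Tevelev's Lemma itself survives in this example only because $\Trop_X(Y)=\{0\}$: every point of $Y^{an}$ tropicalizing into $\mathring{\sigma}$ reduces to $(0,1)\notin X$ and hence does not lie in $X^\beth$. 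The same computation shows that here $\gamma^\Sigma\big(\Trop_U(Y)\big)=\{0\}$ while $\Trop(\overline{Y}^{T})\cap\Sigma_Z=\R_{\geq 0}$, so the identification of Proposition \ref{prop_Tropchart} cannot be used as a black-box equivalence between the two sides of the lemma: the passage between $\Trop_U(Y)$ (which only sees points \emph{bounded on $U$}) and $\Trop(\overline{Y}^{T})$ (which sees all of $Y^{an}$) is precisely where the content of the statement is hidden. A smaller, fixable issue is that for a fixed chart $U$ neither ``$\overline{Y}\cap E\neq\emptyset$'' nor ``$\Trop_X(Y)\cap\mathring{\sigma}\neq\emptyset$'' localizes to $U$; one must first choose a witness (a point of $\overline{Y}\cap E$, resp.\ a point $x$ with $r(x)\in E$) and then a chart containing it.

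The alternative route you sketch in your last sentences is the paper's actual proof, and it is the one that works: a point $x\in Y^{an}\cap X^\beth$ with $\trop_X(x)\in\mathring{\sigma}$ has $r(x)\in E$ by the reduction square of Proposition \ref{prop_tropwelldef} together with Corollary \ref{cor_stratacone}, and $r(x)\in\overline{Y}$ since $\overline{Y}^{an}\cap X^\beth=\overline{Y}^\beth$; conversely a point of $\overline{Y}\cap E$ is the reduction of some $x\in Y^{an}\cap X^\beth$ (take a valuation of $k(Y_i)$ centered there), whose tropicalization then lies in $\mathring{\sigma}$. This argument never leaves $X^\beth$, so the boundedness condition $r(x)\in X$ is carried along automatically and the failure mode above cannot occur. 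I would promote that sketch to your main proof and drop the chart-based reduction.
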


\begin{proof} Our argument is essentially a version of the one presented \cite[Lemma 11.6]{Gubler_guide}. Assume there is a point $w$ in the intersection of $\Trop_X(Y)$ and $\mathring{\sigma}$. Then there is an element $x\in Y^{an}\cap X^\beth$ such that $\trop_Y(x)=w$ and $x$ will lie in $r^{-1}(E)$ by Corollary \ref{cor_stratacone}. Now $r(x)$ will be in $\overline{Y}$, since $\overline{Y}^{an}\cap X^\beth=\overline{Y}^\beth$. Thus $E\cap\overline{Y}$ is non-empty. 

Conversely suppose $\overline{Y}\cap E$ is non-empty. We can find a point $x$ in the intersection of $r^{-1}(E)$ and $Y^{an}\cap X^\beth$ and by Corollary \ref{cor_stratacone} the intersection of $\Trop_X(Y)$ and the relative interior of $\sigma$ is non-empty. 
\end{proof}

\begin{proof}[Proof of Theorem \ref{thm_tropcomplogreg}]
\framebox{Part (i):} Suppose $\overline{Y}$ is complete. Then it is already closed in $X'$ and cannot intersect a stratum of $X'-X$. By Lemma \ref{lemma_Tevelevslemma} the tropical variety $\Trop_{X'}(Y)$ does not intersect the relative interior of any cone $\sigma$ in $\Sigma_{X'}$ that is not contained in $\Sigma_X$. Therefore we have $\Trop_{X'}(Y)\subseteq\Sigma_X$. 

Conversely assume $\overline{Y}$ is not complete. Then the closure $\overline{Y}'$ of $Y$ in $X'$ has to intersect a stratum of the boundary divisor $X'-X$. By Tevelev's lemma this implies $\Trop_{X'}(Y)$ intersecting the relative interior of a cone $\sigma$ in $\Sigma_{X'}$ that is not contained in $\Sigma_X$. Thus $\Trop_{X'}(Y)$ is not a subset of $\Sigma_X$. 
 
\framebox{Part (ii):} Suppose $\Trop_{X'}(Y)$ contains $\Sigma_X$. Note first that the intersection of $\overline{Y}$ with all toroidal strata is non-empty by Lemma \ref{lemma_Tevelevslemma}. Let $\sigma=\sigma(\xi)$ be the cone in $\Sigma_X$ associated to $\xi\in F_X$. We want to show that 
\begin{equation*}
\dim \overline{Y}\cap E=\dim Y -\codim_XE \ ,
\end{equation*}
where $E=E(\xi)$ denotes the stratum $E$ corresponding to $\xi$. The inequality $\geq$ always holds; so we are left to show the converse inequality. 

Let $\gamma\mathrel{\mathop:}U\rightarrow Z=\Spec k[P]$ be a $\xi$-small chart into a an affine toric variety over the torus $T=\Spec k[P^{gp}]$. Set $N=\Hom(P^{gp},\Z)$. Then under the identifications of Lemma \ref{lemma_tropchart} we have $\sigma=\Sigma_Z$ as well as $\Trop(\overline{Y}^{T})\supseteq\Sigma_Z$ by Lemma \ref{prop_Tropchart}. Now choose a fan $\Delta\subseteq N_\mathbb{R}$ containing $\sigma$ such that $\Trop(\overline{Y}^T)=\Delta$ and consider the toric variety $Z'=Z'(\Delta)$ defined by $\Delta$. 

By part (i) the closure $\overline{Y}^{Z'}$ of $Y$ in $Z'$ is complete and so \cite[Theorem 2.4]{Hacking_homology} implies that $\overline{Y}^{Z'}$ intersects the toric stratum $E'=E(\xi')$ corresponding to $\xi'=\gamma(\xi)\in F_Z$ in the expected dimensions. Since $\gamma$ is \'etale, this implies 
\begin{equation*}\begin{split}
\dim \overline{Y}\cap E\cap U &\leq\dim\overline{Y}^{Z'}\cap E'\\
&=\dim\overline{Y}^{Z'} -\codim E'\\
&=\dim \overline{Y} -\codim E \ ,
\end{split}\end{equation*}
and this shows the desired inequality, since we can cover $\overline{Y}\cap E$ by $\xi$-small toric charts.
\end{proof}

\begin{proof}[Proof of Theorem \ref{thm_compsubvar}]
By Nagata's embedding theorem and logarithmic resolution of singularities we can find a compactification $X^\ast$ of $X_0$ such that $X^\ast-X_0$ has simple normal crossings. So, endowed with the logarithmic structure induced by the divisor $X^\ast-X_0$, the variety $X^\ast$ is log-regular. 

By Theorem \ref{thm_BieriGroves} $\Trop_{X^\ast}(Y)$ is a polyhedral cone complex with integral structure. So we can choose a proper subdivision $F'\rightarrow F^\ast$ of $F^\ast=F_{X^\ast}$ corresponding to a toroidal modification $X'\rightarrow X^\ast$ such that $\Trop_{X'}(Y)=\Trop_{X^\ast}(Y)$ is subcomplex of $\Sigma_{X'}$. Now let $X$ be the log-regular variety consisting of all strata $E=E(\xi)$ of $X'$ whose associated cone $\sigma=\sigma(\xi)$ is a subset of $\Trop_{X'}(Y)$, or equivalently (by Lemma \ref{lemma_Tevelevslemma}) all strata of $X'$ intersecting the closure $\overline{Y}$ of $Y$ in $X'$. By Theorem \ref{thm_tropcomplogreg} the log-regular variety $X$ has the desired properties. 
\end{proof}


\section{Dual complexes and the cohomology of links}

Let $D$ be a divisor with simple normal crossings on a smooth variety $X$. The \emph{dual complex} $\Delta(X,D)$ is the $\Delta$-complex in the sense of \cite[Section 2.1]{Hatcher_algtop} whose vertices correspond to irreducible components $D_1,\ldots, D_l$ of $D$ and whose $k$-dimensional simplices correspond to irreducible components of $k$-fold intersections of the $D_i$. By the strata-cone correspondence Corollary \ref{cor_stratacone} we have the following Lemma \ref{lemma_dualcomplex=link}.

\begin{lemma}\label{lemma_dualcomplex=link}
The link $\Link(\Sigma_{(X,D)})$ of the cone complex $\Sigma_{(X,D)}$ is equal to the dual complex of $\Delta(D)$.
\end{lemma}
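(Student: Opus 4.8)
The plan is to unwind the two definitions and observe that they encode the same combinatorial data. Recall that for a simple normal crossings pair $(X,D)$, the associated Kato fan $F_{(X,D)}$ has points corresponding to the strata of $X$: its closed points are the generic points of the irreducible components $D_i$, and more generally a point $\xi\in F_{(X,D)}$ corresponds to a generic point of an irreducible component of a $k$-fold intersection $D_{i_1}\cap\cdots\cap D_{i_k}$, with $\overline{\mathcal O}_{F,\xi}\simeq\mathbb{N}^k$. By Proposition \ref{prop_conecomplexreduction}, the cone $\sigma_\xi=r^{-1}(\xi)\subseteq\Sigma_{(X,D)}$ is then $\Hom(\mathbb{N}^k,\mathbb{R}_{\geq 0})=\mathbb{R}_{\geq 0}^k$, a $k$-dimensional simplicial cone whose $k$ rays are indexed precisely by the branches $D_{i_1},\dots,D_{i_k}$ passing through that stratum. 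These rays are exactly the one-dimensional cones $\sigma_{\xi'}$ for $\xi'$ the generic point of the component of $D_j$ containing the stratum, so the face relations among the $\sigma_\xi$ match the incidence relations among strata.

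First I would make precise the passage from a cone complex to its link: $\Link(\Sigma)$ is the $\Delta$-complex (or polytopal complex) obtained by intersecting $\Sigma$ with a suitable sphere — concretely, for each cone $\sigma_\xi$ of dimension $k$ one takes the $(k-1)$-simplex of unit-sum vectors in $\mathbb{R}_{\geq 0}^k$, and glues these along faces according to the face relations in $\Sigma_{(X,D)}$. Next I would invoke the strata-cone correspondence, Corollary \ref{cor_stratacone}: it gives an order-reversing bijection between cones of $\Sigma_{(X,D)}$ and strata of $X$, hence an order-preserving bijection between nonzero cones and strata contained in $D$. Under this bijection, a cone of dimension $k$ corresponds to a stratum of codimension $k$, i.e. to an irreducible component of a $k$-fold intersection of the $D_i$; and the face poset of the cone matches the adjacency poset used in the construction of $\Delta(X,D)$. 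Thus the $(k-1)$-simplices of $\Link(\Sigma_{(X,D)})$ are in bijection with the $(k-1)$-simplices of $\Delta(X,D)$, compatibly with face maps, and the identification of $\Delta$-complexes follows.

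Concretely the steps are: (1) identify the points of $F_{(X,D)}$ and the local monoids $\overline{\mathcal O}_{F,\xi}\simeq\mathbb{N}^{k}$ with the strata of the snc pair and their codimensions, citing the toric-chart description (Example \ref{example_divlogstr} and Lemma \ref{lemma_charmorchart}); (2) apply Proposition \ref{prop_conecomplexreduction} to see that $\sigma_\xi\simeq\mathbb{R}_{\geq 0}^k$ is simplicial with rays canonically labelled by the branches of $D$ through the stratum, and that faces of $\sigma_\xi$ correspond to the strata in which that stratum is contained; (3) spell out that the link functor turns this into the simplicial $\Delta$-complex whose $(k-1)$-simplices are codimension-$k$ strata with the evident face maps; (4) compare with the definition of $\Delta(X,D)$ and conclude the two $\Delta$-complexes agree.

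The main obstacle I anticipate is bookkeeping rather than mathematical depth: one must be careful that the face maps match with the correct orientations/identifications — in particular that the ordering of the vertices of a simplex of $\Delta(X,D)$ (which records which component $D_i$ each vertex belongs to) corresponds exactly to the standard basis rays of $\mathbb{R}_{\geq 0}^k=\Hom(\mathbb{N}^k,\mathbb{R}_{\geq 0})$, and that when several components of some $D_i$ pass through a stratum the correspondence still sends distinct branches to distinct rays (which is where simple normal crossings, as opposed to merely normal crossings, is used). Once this identification of labelled simplices and their face relations is in place, the statement is essentially a translation, so I would keep the proof short, pointing to Corollary \ref{cor_stratacone} and Proposition \ref{prop_conecomplexreduction} for the substance and leaving the combinatorial verification to the reader.
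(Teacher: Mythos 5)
Your proposal is correct and follows essentially the same route as the paper, which simply derives the lemma from the strata--cone correspondence (Corollary \ref{cor_stratacone}) together with the local description of the cones in Proposition \ref{prop_conecomplexreduction}, referring to Thuillier for details. You have merely spelled out the combinatorial bookkeeping (local monoids $\mathbb{N}^k$, simplicial cones $\mathbb{R}_{\geq 0}^k$, matching of face posets) that the paper leaves implicit.
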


For the proof of Lemma \ref{lemma_dualcomplex=link} we also refer to \cite[Proposition 4.7]{Thuillier_toroidal}).

Suppose now that $k=\C$. The following Lemma is essentially contained in \cite{Deligne_hodgeII}. We refer to the proof of \cite[Theorem 4.4]{Payne_boundarycomplexes} for an explicit proof of this fact.

\begin{lemma}[Deligne's Lemma]\label{lemma_Deligneslemma}
There are natural isomorphisms
\begin{equation*}
\tilde{H}^{i-1}\big(\Delta(X,D),\Q\big)\simeq W_0 H_c^i(X_0,\mathbb{Q})
\end{equation*}
for all $i\geq 0$. 
\end{lemma}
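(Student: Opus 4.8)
\textbf{Proof plan for Lemma \ref{lemma_Deligneslemma} (Deligne's Lemma).}

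The plan is to deduce the statement directly from Deligne's construction of the mixed Hodge structure on the cohomology with compact supports of $X_0 = X - D$, following the spectral-sequence computation in \cite{Deligne_hodgeII}. First I would recall the setup: since $X$ is smooth and proper over $\C$ and $D$ is a simple normal crossings divisor, the open variety $X_0$ carries a canonical mixed Hodge structure, and the weight filtration on $H^i_c(X_0,\Q)$ is computed by a spectral sequence whose $E_1$-page is built from the cohomology of the multiple intersections $D_I = \bigcap_{j \in I} D_j$ of the components of $D$. Concretely, writing $D^{(p)} = \coprod_{|I| = p} D_I$ for the disjoint union of the $p$-fold intersections (with $D^{(0)} = X$), one has the complex
\begin{equation*}
H^0(X,\Q) \longrightarrow H^0(D^{(1)},\Q) \longrightarrow H^0(D^{(2)},\Q) \longrightarrow \cdots
\end{equation*}
of lowest-weight pieces, and the key input from \cite{Deligne_hodgeII} (see the proof of \cite[Theorem 4.4]{Payne_boundarycomplexes}) is that $W_0 H^i_c(X_0,\Q)$ is precisely the $i$-th cohomology of this complex.

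The second step is to identify this complex with the simplicial (or rather $\Delta$-complex) cochain complex of $\Delta(X,D)$. By definition $\Delta(X,D)$ has one $(p-1)$-simplex for each irreducible component of each $D_I$ with $|I| = p$; hence $H^0(D^{(p)},\Q)$ is canonically the $\Q$-vector space spanned by the $(p-1)$-simplices of $\Delta(X,D)$, i.e. the simplicial cochain group $C^{p-1}(\Delta(X,D),\Q)$, with $H^0(X,\Q) = \Q$ playing the role of $C^{-1}$ (the augmentation). One then checks that the differential induced by the restriction maps in Deligne's complex agrees, up to the standard alternating signs coming from orderings of the index sets $I$, with the simplicial coboundary — this is where one must be slightly careful, since the restriction maps $H^0(D_I) \to H^0(D_J)$ for $I \subset J$, $|J| = |I|+1$, need not be injective or surjective, but they send a connected component to the sum of the connected components of $D_J$ contained in it, which is exactly the incidence pairing defining the $\Delta$-complex coboundary. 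Consequently the cohomology of Deligne's weight-zero complex in degree $i$ is $H^{i}$ of the augmented cochain complex of $\Delta(X,D)$, which by definition is the reduced cohomology $\tilde H^{i-1}(\Delta(X,D),\Q)$.

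Combining the two steps gives the desired natural isomorphism $\tilde H^{i-1}(\Delta(X,D),\Q) \simeq W_0 H^i_c(X_0,\Q)$ for all $i \geq 0$; naturality in $(X,D)$ follows from functoriality of the weight filtration and of the dual complex under morphisms of snc pairs. The main obstacle I anticipate is purely bookkeeping rather than conceptual: pinning down the sign conventions so that Deligne's differential and the $\Delta$-complex coboundary literally coincide, and handling the low-degree edge terms ($i = 0, 1$) where the reduced versus unreduced distinction and the augmentation map must be tracked carefully. Since, as the excerpt notes, this is ``essentially contained in \cite{Deligne_hodgeII}'' and spelled out in \cite[Theorem 4.4]{Payne_boundarycomplexes}, I would keep the exposition brief and cite those sources for the spectral-sequence input, giving only the dictionary between $H^0(D^{(p)})$ and the cochains of $\Delta(X,D)$ in detail.
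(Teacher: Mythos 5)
Your proposal is correct and follows exactly the route the paper relies on: the paper gives no argument of its own but defers to the proof of \cite[Theorem 4.4]{Payne_boundarycomplexes}, which is precisely the weight-spectral-sequence computation for $H^\bullet_c(X_0,\Q)$ and the identification of its weight-zero row $H^0(X,\Q)\to H^0(D^{(1)},\Q)\to\cdots$ with the augmented cochain complex of $\Delta(X,D)$ that you spell out. Note only that you (correctly) assume $X$ proper, a hypothesis the paper leaves implicit in the statement of the lemma but which is genuinely needed for the conclusion.
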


\begin{proof}[Proof of Theorem \ref{thm_logregW0}]
Suppose that $X$ is log-regular. By \cite[Proposition 9.8]{Kato_toricsing}, which, in turn, relies on \cite[Theorem 11*]{KKMSD_toroidal}, we can find a proper subdivision $F'\rightarrow F$ inducing a toroidal modification $X'\rightarrow X$ such that $X'$ is smooth. Then the divisor $D'=X'-X_0$ has simple normal crossings and Lemma \ref{lemma_dualcomplex=link} as well as Deligne's Lemma \ref{lemma_Deligneslemma} imply 
\begin{equation*}
\tilde{H}^{i-1}\big(\Link(\Sigma_{X'}),\Q\big)=\tilde{H}^{i-1}\big(\Delta(D'),\Q\big)\simeq W_0 H_c^i(X_0,\mathbb{Q}) \ .
\end{equation*}
Since the proper subdivision $F'\rightarrow F$ induces a proper subdivision $\Sigma_{F'}$ of $\Sigma_F$, it follows that $\Link(\Sigma_{X'})$ and $\Link(\Sigma_X)$ are homeomorphic. 
\end{proof}


\bibliographystyle{alpha}
\bibliography{biblio}{}

\end{document}